\documentclass[num-refs]{wiley-article}

\usepackage{ifthen}
\usepackage{tikz}
\usetikzlibrary{calc,arrows,automata}
\usetikzlibrary{decorations.pathreplacing}

% Add additional packages here if required
\usepackage{algorithm}
\usepackage{algpseudocode}
\usepackage{amsmath}
\usepackage{algorithm}
\usepackage{algpseudocode}
\usepackage{amsmath}
\usepackage{multirow}
\usepackage{colortbl}
\usepackage{caption}
\usepackage[scr=boondoxo,scrscaled=1.05]{mathalfa}
\usepackage{subcaption}

\algnewcommand\algorithmicforeach{\textbf{for each}}
\algdef{S}[FOR]{ForEach}[1]{\algorithmicforeach\ #1\ \algorithmicdo}
\newtheorem{defn}{Definition}
\newtheorem{prop}{Proposition}
\newtheorem{question}{Question}
\newtheorem{conjecture}{Conjecture}
% Update article type if known
\papertype{Original Article}
\paperfield{arXiv} 

\title{Counterexample to a Boesch's Conjecture}

\author[1\authfn{1}]{Nicole Rosenstock }
\author[2\authfn{2}]{Eduardo A. Canale }

% Include full affiliation details for all authors
\affil[1]{Unidad de Bioestad\'istica, Departamento de Salud P\'ublica, Facultad de Veterinaria, Universidad de la Rep\'ublica, Montevideo, 13000, Uruguay}
\affil[2]{Instituto de Matem\'atica y Estad\'istica, 
Facultad de Ingenier\'ia, Universidad de la Rep\'ublica, Montevideo, 11400, Uruguay}

\corraddress{Nicole Rosenstock,   Universidad de la Rep\'ublica, Montevideo, 11400, Uruguay}
\corremail{niky04@gmail.com}

\fundinginfo{Comisi\'on Sectorial de Investigaci\'on Cient\'ifica (CSIC)}

% Include the name of the author that should appear in the running header
\runningauthor{Rosenstock et al.}

\begin{document}

\maketitle

\begin{abstract}
A key issue in network reliability analysis.
A graph with $n$ nodes and  whose $e$ edges fail independently with probability $p$ is an \emph{Uniformly Most Reliable Graph} (UMRG) if it has the highest reliability among all graphs with the same order and size for every value of $p$. The \emph{all-terminal reliability} is a polynomial in $p$ which defines the probability of a network to remain connected if some of its components fail. If the coefficients of the reliability polynomial are maximized by a graph, that graph is called  \textit{Strong Uniformly Most Reliable Graph}  (SUMRG) and it should be UMRG.
An exhaustive computer search of the SUMRG with vertices up to 9 is done. Regular graphs with 10 to 14 vertices that maximize tree number are proposed as candidates to UMRG.
As an outstanding result a UMRG with 9 vertices and 18 edges which has girth 3 is found, so smaller than the conjectured by Boesch in  1986. A new conjecture about UMRG's topology is posed here: the $(n,e)$-UMRG is $\overline{(k-1)C_3\cup C_{3+r}}$ whenever $n=3k+r$,$n\geq5$ and $e={n(n-3)}/{2}$.
A reformulation of Boesch's conjecture is presented stating that if a $(n, {kn}/{2})$-UMRG exists and it has girth $g$, then it has maximum girth  among all $k$-regular $(n,{kn}/{2})$ graphs and minimum number of $g$-cycles among those $k$-regular $(n,{kn}/{2})$ graphs with girth $g$.

\keywords{Network Reliability, Uniformly Most Reliable, Regular graphs, All-terminal reliability, Boesch's Conjecture,Reliability polynomial}
\end{abstract}

\section{Introduction}

	In the past few decades, network reliability analysis has been gaining increasing interest among computer scientists and mathematicians. Based on both graph-theoretical and probabilistic models, network reliability intends to determine the probability of correct operation of a system, knowing the reliability of its components~\cite{ballcombinat}. 
	
	A network is modelled as a simple, undirected graph $G=(V,E)$. Different reliability problems should emerge depending on whether nodes or edges fail. In this paper we will assume that nodes are perfect and edges fail with identical and independent probability $q = 1 - p$, $0<q<1$. Moreover, a decision should be made on which network reliability measure to choose. We will focus on the \textit{all-terminal reliability} measure. The all-terminal reliability $R_G(p)$, defined as the probability that a network remains connected after some of its components fail, is a polynomial in $p \in [0,1]$. A precise definition of the reliability polynomial, as presented in~\cite{Colbourn91}, is the following:
	let $N_i$ denote the number of connected spanning subgraphs with $i$ edges. Then the reliability is a polynomial in p,
	\begin{equation}\label{reliab}
		R_G(p)=\sum_{i=0}^{e}N_ip^i(1-p)^{e-i}
	\end{equation}

	where $e$ is the total number of edges in $G$.

	For convenience, we will be dealing, not only with the reliability polynomial but also with unreliability polynomial $U_G(p)$~\cite{boesch86}, defined as follows: let $m_k$ be the number of edge disconnecting sets of size k. Then the probability that $G$ becomes disconnected can be expressed as
	\begin{equation}\label{unreliab}
		U_G(p)=\sum_{k=0}^{e}m_k(1-p)^kp^{e-k},
	\end{equation}	
	where $U_G=1-R_G$.

	Given a graph with $n$ nodes and $e$ edges, we call any graph with the same order and size a $(n,e)$-graph. For any $n$ and $e$, if $p$ is fixed, it is clear that there always exists a graph that minimizes $U_G(p)$, i.e $U_G(p) < U_H(p)$, for all $(n,e)$-graphs $H$. Analogously, with $p$ fixed, there always exists a graph that maximizes $R_G(p)$, i.e $R_G(p) < R_H(p)$, for all $(n,e)$-graphs $H$. When $p$ is fixed and it is either large or small, the problem of finding a graph that maximizes reliability has been reduced to a graph theoretic problem. Some of these findings are thoroughly described in~\cite{boesch86}. For small $p$, the graph that minimizes unreliability is a $\lambda$-optimal graph. Namely, $m_\lambda(G)$ is minimum among all $(n,e)$-graphs with maximum edge-connectivity $\lambda$. On the contrary, for large $p$,  the graph that minimizes unreliability is a $t$-optimal graph. Namely, a graph with maximum number of spanning trees. 	
	A graph with $n$ nodes and $e$ edges that minimizes $U_G(p)$, or maximizes $R_G(p)$, for every $p \in [0,1]$ is called \emph{Uniformly Most Reliable Graph} (UMRG).
	
	Given a graph with $e$ edges and its unreliability polynomial $U(G)$, the $e+1$ vector $(m_0,m_1,\dots,m_e$) is called the \textit{edge disconnecting vector} or also, \textit{edge-cut frequency vector}~\cite{boesch1985cut}. As stated in~\cite{boesch86b}, a sufficient condition for a graph $G$ to be UMRG is that it minimizes every term in the edge disconnecting vector, i.e $m_i(H)\geq m_i(G)$, for all $i$ and all $(n,e)$-graphs. 
	Let us call such graphs \emph{Strong Uniformly Most Reliable Graph} (SUMRG).
	
	By definition, there cannot exist disconnecting sets whose cardinality is lower than the edge connectivity $\lambda$. Therefore, $m_i(G)=0$, for all $i<\lambda$ while $\lambda$ must be maximum~\cite{bauer1987validity}. In terms of connected spanning subgraphs, this is equivalent to say that 
	\begin{equation}
	N_{e-i} = \left(\genfrac{}{}{0pt}{}{e}{e-i}   \right) \qquad \forall i<\lambda.
	\end{equation}
	Other coefficients where also described by Van Slyke and Frank~\cite{Slyke71} as follows:
	
	\begin{align}
		N_{e-i} &= 0,\quad i>e-n+1, \label{cero}\\
		N_{e-i} &= n_t, \quad i =e-n+1, \label{arboles}\\
		N_{\lambda} &= \left(\genfrac{}{}{0pt}{}{e}{\lambda}   \right) - n_\lambda, \label{mincut}
	\end{align}
	where $n_t$ is the number of spanning subtrees and $n_\lambda$ the number of minimum cardinality network cuts. It is worth noting that for each $i$, $m_i =  \left(\genfrac{}{}{0pt}{}{e}{i}   \right) - N_{e-i}$. Since calculation of the polynomial coefficients is 
	NP-hard as proved by ~\cite{provanball}, the coefficients described above can reduce significantly the computational effort required to determine the reliability polynomial for any $(n,e)$-graph. The problem of finding the edge connectivity of a graph was solved by Ford and Fulkerson~\cite{fordful}. 
	On the other hand, according to the Kirchhoff Matrix Tree Theorem, the number of spanning subtrees equals any cofactor of the Laplacian matrix~\cite{kirchhoff1847ueber} which can be easily computed in polynomial time. Finally, Ball and Provan~\cite{ballprovanAlg} describe an algorithm in order to compute $n_\lambda$, the number of minimum cardinality network cuts, in polynomial time. Therefore, most of the computational effort should be directed towards the calculation of the remaining coefficients. Once the coefficients of $R_G$ (or $U_G$) are computed for every $(n,e)$-graph, finding the SUMRG implies choosing the graph that maximizes all coefficients $N_i$ (or minimizes every $m_i$), as long as it exists one that satisfies this condition.
	
	In the context of reliable network synthesis, the following corollary arises as a consequence of Boesch's Theorem linking spanning graphs to graph reliability~\cite{boesch09}: if $G$ is uniformly most-reliable, then:
	\begin{enumerate}
		\item $G$ has the maximum number of spanning trees among all simple $(n,e)$-graphs. Namely, UMRG are $t$-optimal and
		\item $G$ is max-$\lambda$, i.e., has the maximum possible value of $\lambda$ among all simple $(n,e)$-graphs, where $\lambda(G) = \lfloor 2e/n \rfloor$, and the minimum number $m_\lambda$ among all max-$\lambda$ graphs.
	\end{enumerate}
	
	Apart from these properties, several conjectures have been proposed by authors: a uniformly-most reliable network has degrees almost regular, maximum girth, minimum diameter, no multiple edges and that regular complete multipartite graphs are uniformly-most reliable~\cite{myrvold1996reliable}. Ath \& Sobel~\cite{ath_sobel_2000} proposed possible  counterexamples for two of these conjectures: the diameter and the girth conjecture. Nevertheless, they did not prove whether the graphs were UMRG.
   Let us highlight the last mentioned conjecture.
   \begin{conjecture}\label{conjecture}
   The  UMRG has maximum girth among the graphs with the same order and size. (Boesch\cite{boesch86})
   \end{conjecture}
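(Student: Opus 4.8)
The plan is to settle this conjecture by searching for an explicit counterexample: a pair $(n,e)$ for which a UMRG exists and is computable, yet fails to have maximum girth among all $(n,e)$-graphs. Since certifying that a graph is UMRG directly from the full reliability polynomial is delicate, I would lean first on the SUMRG criterion recorded above---any graph minimizing every coefficient $m_i$ of the edge-disconnecting vector is automatically UMRG. The first step is therefore to fix a small regular target, say $(n,e)=(9,18)$, where the maximum edge-connectivity $\lambda=\lfloor 2e/n\rfloor=4$ forces any UMRG to be $4$-regular.

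Next I would prune the candidate pool using the necessary conditions stated in the Introduction. By Boesch's corollary a UMRG must be $t$-optimal and max-$\lambda$ with minimum $m_\lambda$, and the relevant invariants---the spanning-tree count $n_t$ (via the Matrix--Tree theorem), the number of minimum cuts $n_\lambda$ (via the Ball--Provan algorithm), and the top coefficients $N_e, N_{e-1}, N_\lambda$---are all computable in polynomial time. I would enumerate the $4$-regular $(9,18)$-graphs, discard those that are not $t$-optimal or not min-$m_\lambda$, and compute the complete reliability polynomial only for the survivors. If one graph $G$ minimizes every $m_i$, it is a SUMRG, hence UMRG; if no SUMRG exists, I would instead verify the UMRG property by a direct pairwise comparison of reliability polynomials over the pruned list.

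The decisive step is the girth comparison. I expect the resulting UMRG $G$ to contain a triangle, so $\mathrm{girth}(G)=3$. To refute the conjecture it then suffices to exhibit a single $(9,18)$-graph of larger girth. Since Mantel's theorem permits up to $\lfloor 9^2/4\rfloor=20$ edges in a triangle-free graph on $9$ vertices, deleting any two edges of the bipartite graph $K_{4,5}$ yields a triangle-free $(9,18)$-graph, of girth $\geq 4 > 3 = \mathrm{girth}(G)$. This contradicts the assertion that the UMRG attains the maximum girth, and so disproves the conjecture.

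The main obstacle is twofold. First, computing the full edge-disconnecting vector is NP-hard in general, so the exhaustive certification that $G$ is SUMRG (or UMRG) is only tractable because $n=9$ is small; the delicate engineering is to let the polynomial-time invariants prune the search aggressively enough that the remaining full-polynomial computations become feasible. Second, to claim this is the \emph{smallest} counterexample one must also verify, for every $(n,e)$ with $n\le 8$, that no UMRG violates the girth conjecture---an exhaustive check in which ties and near-coincidences among reliability polynomials must be resolved with care.
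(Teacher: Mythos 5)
Your proposal matches the paper's route essentially step for step: the authors also disprove the conjecture at $(n,e)=(9,18)$ by pruning to the $t$-optimal candidate, certifying it as a SUMRG via an exhaustive comparison of edge-disconnecting vectors over all $27015$ two-connected $(9,18)$-graphs, and observing that this UMRG has girth $3$ while the maximum girth in the class is $4$. Your explicit Mantel/$K_{4,5}$-minus-two-edges witness for a girth-$\geq 4$ graph is a clean substitute for the paper's purely computational observation that four non-regular $(9,18)$-graphs attain girth $4$, but the overall argument is the same.
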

   
	Boesch had once conjectured that UMRG always exist~\cite{boesch86}, ``not realizing that Kelman's \cite{kelmans1981graphs} had much earlier (1981) published an infinite family of counterexamples''\cite{myrvold1996reliable}. Also, this author found UMRG of 8 or fewer nodes by finding SUMRG. Although these graphs are presented in~\cite{myrvoldTech}, as far as we know, this technical report is not available in the literature.

	Studies over the past decades have provided evidence of the existence of UMRG in certain classes of regular graphs. Among $(n,n)$-graphs, cycle graph $C_n$ is UMRRG~\cite{grossUOR}.  Graph $K_n$, the only graph with $n$ nodes and $\binom{n}{2}$ edges, is therefore UMRRG as well. Additionally, Kelmans showed that when $\binom{n}{2} - \lfloor n/2\rfloor \leq e \leq \binom{n}{2} $ there exists a UMRG and it corresponds to the complete graph $K_n$ with a matching removed~\cite{kelmans1981graphs}, therefore, $K_n$ minus a perfect matching is a  UMRRG. Other regular graphs were also found to be UMRG such as $K_{3,3}$~\cite{wangn3}, Wagner~\cite{wagner}, Petersen~\cite{petersen}, Yutsis~\cite{yutsis}, $K_{4,4}$~\cite{grafok44}, $\overline{K_4\cup C_3}$~\cite{myrvoldTech}. Both Heawood~\cite{ath_sobel_2000b} and Möbius-Kantor graphs~\cite{bourel} are potential UMRRG, however, there do not exist formal proofs so far. Similarly, there do not exist proofs for the still-open conjecture that regular complete bipartite graphs are UMRG~\cite{boesch86b}. Despite that Cheng~\cite{cheng1981maximizing} demonstrated their $t$-optimality, only the aforementioned $K_{4,4}$ was proven to be UMRG. 
	As we mentioned above, there are no UMRG for some values  $(n, e)$, but  these values of $(n,e)$ never correspond to regular graphs. So, the following  question naturally arises
 \begin{question}\label{question}
If $kn=2e$, is there an UMRG among the $(n,e)$-graphs?
	\end{question}

	In this paper, we will restrict the study of uniformly most reliable graphs to the class of 2-connected $k$-regular graphs. The importance of biconnectivity was independently established by Boesch et al.~\cite{boesch86b} and Wang~\cite{wangn3}. While the former proved that if $G$ is $t$-optimal with $e\geq n\geq3$, then it is 2-connected, the latter proved that if $G$ is SUMRG  then it is 2-connected.
	
	The coefficients of the candidates to UMRG for $n\leq 9$,  $e \leq k\cdot n /2 $ and $k=3,\dots,n-3$ have been computed and compared against other $(n,e)$-graphs. The contribution of this work is:
	\begin{enumerate}
	\item providing a list of regular uniformly most reliable graphs, for graphs with $n\leq 9$,
	\item present a counter-example to Conjecture~\ref{conjecture},
	\item prove that a positive answer to  Question~\ref{question} is incompatible with  Conjecture~\ref{conjecture}.
	\item describe properties such as energy and Laplacian energy of graphs for the UMRG.
	\end{enumerate}

		\section{Materials and Methods}
	
	Most of the computational experiments were conducted using R language (\cite{RLang}) and the \textit{igraph} v1.3.0 R library~\cite{igraph}, on a Home-PC with Intel R
CoreTM i5-3470U, 3.20GHz, 64-bit OS. Experiments requiring larger computational effort were performed in ClusterUY (\cite{nesmachnow2019cluster}).  
Using the program Nauty v2.7~\cite{nauty}, a list of 2-connected (n,e)-graphs was generated, where $6\leq n \leq 14$ and $e = k\cdot n/2$, with $k \in \{3,\dots,n-3\}$. Nauty allows a fast generation of non-isomorphic small graphs of a specified class.  
	
\subsection{Computation of coefficients $N_i$ and $m_k$ }

	An algorithm was implemented in order to count the number of connected spanning subgraphs with $i$ edges for each graph, namely $N_i$, with $0\leq i\leq e$~(Algorithm \ref{Ncoef}). 
	
\begin{algorithm}[H]
	\caption{N = ComputeNi(G(V,E))} \label{Ncoef}
	\begin{algorithmic}[1]
		\Require{$G= (n,e)$-graph}
		\Ensure{$\mathcal{N}= (N_0,\ldots,N_e)$}
		\State $\mathcal{N} = (0,\dots,0)$
		\State $\lambda = EdgeConnectivity(G)$
			\For {$k=e:0$}  \label{forstart}
			\If{$k>e-n+1$}
			\State{$\mathcal{N}_{e-k}=0$}
			\ElsIf{$k=e-n+1$}
			\label{STrees}\State{$\mathcal{N}_{e-k}=SpanningTrees(G)$}
			\label{lambda}\ElsIf{$e-k=\lambda$}
			\State{$\mathcal{N}_{\lambda}= \left(\genfrac{}{}{0pt}{}{e}{\lambda}   \right) - MinCardinalCut(G,\lambda) $}
			\Else{
			\State {$\mathcal{C}= $ All possible sets of $k$ edges} \label{bAll}
				\ForEach{$c \in C$}
				\State {$G'  \leftarrow G - {c}$}
				\If{$G'$ is connected}
				\State{$\mathcal{N}_{e-k} = \mathcal{N}_{e-k}+1$}
				\EndIf
			\EndFor\label{eAll}
			\EndIf}
			\EndFor  \label{forend}
		\State \Return {$\mathcal{N}$}
	\end{algorithmic}
\end{algorithm}

\begin{algorithm}[H]
	\caption{$N_{\lambda}$ = MinCardinalCut(G(V,E))} \label{Nlambda}
	\begin{algorithmic}[1]
		\Require{$G= (n,e)$-graph}
		\Ensure{$N_{\lambda}$}
		\State $\lambda = EdgeConnectivity(G)$
		\State $G' = Directed(G)$
		\State $N_{\lambda}=0$
		\State Choose random node $s$ in $G'$
		\ForEach {$t \in V - \{s\}$}
		\State $c = MinCut(G',s,t)$
		\If {$c=\lambda$}
		\State {$N_{\lambda} = N_{\lambda} + stMinCuts(G',s,t)$}
		\EndIf
		\State {Collapse nodes $s$, $t$ into single node $s$}
		\EndFor
		\State \Return{$N_{\lambda}$}
	\end{algorithmic}
\end{algorithm}

Algorithm~\ref{Ncoef} receives a graph $G$ as an input. A FOR loop (Lines~\ref{forstart}-\ref{forend}) computes $N_i$ the number of spanning subgraphs with $i$ nodes. In each iteration a set of $k$ edges is removed and the coefficient is determined. To increase algorithm efficiency and lessen computation time, equations~\eqref{cero},~\eqref{arboles} and~\eqref{mincut} were used when calculating the coefficients that met the conditions detailed above. The spanning trees were determined  by calculating a cofactor of the Laplacian matrix of the graph $G$ (Line~\ref{STrees}). To compute $N_\lambda$ (Line~\ref{lambda}) an adaptation of the algorithm CUTENUM described in~\cite{ballprovanAlg} was developed: the \textit{MinCardinalCut} algorithm~(Algorithm \ref{Nlambda}). While CUTENUM was originally developed for directed graphs, \textit{MinCardinalCut} was adapted to undirected graphs. For the remaining coefficients, a brute-force approach was conducted (Line~\ref{bAll} - \ref{eAll}). A list $\mathcal{C}$ of sets of $k$ edges was generated (Line 11). For each set in $\mathcal{C}$, if the removal of the set from the graph $G$ yielded a connected spanning subgraph the coefficient $\mathcal{N}_{e-k}$ was incremented by 1 (Line 12-17). Finally, the $(e+1)$-vector $\mathcal{N}$ containing coefficients $N_{e-k}$ with $k=0,\ldots,e$ was returned.

Algorithm \textit{MinCardinalCut} calculates the number of minimum cardinality cuts in an undirected graph $G$. In line 1, edge connectivity is calculated for graph $G(V,E)$. Graph $G(V,E)$ is converted into a directed graph $G'(N,E')$: every edge $(i,j) \in E$ is replaced by arcs $(i,j)$ and $(j,i) \in E'$. A node $s\in V(G)$ is chosen arbitrarily (Line 4). For each of the remaining nodes $t \in V$, the minimum st-cut $c$ is calculated using \textit{MinCut} function (Line 6). If $c=\lambda$, the amount of minimum st-cuts, calculated with function \textit{stMinCuts} is added to $N_{\lambda}$ (Line 7-9). After each iteration, nodes $s$ and $t$ are collapsed into a single node $s$ (Line 10). Functions \textit{EdgeConnectivity}, \textit{min\_cut} and \textit{stMinCuts} were provided by \textit{igraph} package~(\cite{igraph}) for R~\cite{RLang}.

For computational reasons, algorithm~\ref{Ncoef} was applied to $(n,{kn}/{2})-$graphs meeting the following conditions:
\begin{enumerate}
	\item $6\leq n \leq 7$ ; $k = 3,\ldots,n-3$,
	\item $n = 8$; $k = 3,4,5$.
\end{enumerate}
 A set of coefficients $N_i$ was found for every graph with these characteristics. 

For graphs with order $9$ another approach was taken to reduce computational effort. Since the number of spanning trees can be found in polynomial time through the Kirchoff Matrix Tree Theorem, the Laplacian matrix was computed for each graph with given sizes and orders. According to this theorem, any cofactor of the Laplacian matrix of a graph equals the number of spanning trees of the same graph. Consequently, the number of spanning trees $N_t$ was computed by finding a cofactor of the Laplacian matrix for every graph in each class of $(n,e)-$graphs. From Boesch's Theorem we know that UMRG must be $t-$optimal, hence we determine the graph with the largest $N_t$ among all the graphs in each class. The graph with the largest number of spanning trees for a given $n$ and $e$ is set as a possible candidate to be SUMRG. After choosing a candidate in each class, Algorithm~\ref{Ncoef} was used to compute $N_i$ coefficients for every candidate which were later used to find the number of edge disconnecting sets of size $k$, namely $m_k$, knowing that $m_k = C_{e-i}^e - N_i$ (Algorithm~\ref{NiMk}).

\begin{algorithm}[H]
	\caption{$\mathscr{m} = DisconnectingVector$($G$)} \label{NiMk}
	\begin{algorithmic}[1]
		\Require{$G(n,e)$}
		\Ensure{$\mathscr{m} = (m_0,\cdots,m_e)$}
		\State $\mathcal{N} = ComputeNi(G)$
		\State{$\mathscr{m}=(0,\dots,0)$}
		\ForEach{$i \in \{0,\dots,e\}$}
		\State{$\mathscr{m}_i = C_{e-i}^e - \mathcal{N}_i$}
		\EndFor 
		\State \Return {$\mathscr{m}$}
	\end{algorithmic}
\end{algorithm}

Recalling that a SUMRG minimizes every coefficient $m_k$ in its edge disconnecting vector, a new algorithm was implemented to compare the candidate to SUMRG's edge disconnecting vector $\mathscr{m}$, against the vectors of the remaining graphs in each class, for $0\leq k \leq e$. This is described in Algorithm~\ref{Mkcoef}. Beginning with an empty set $\mathcal{S}$ (Line \ref{l1}), a FOR loop (Lines~\ref{forbeg}-\ref{forend2}) generates $\mathcal{C}$, all possible sets of $k$ edges (Line~\ref{kedges}) of a given graph $G$. A WHILE loop (Lines \ref{whilebeg}-\ref{whileend}) iterates over $\mathcal{C}$. In each iteration, one set $c \in C$ is removed from $G$ and checked if the resulting spanning subgraph is disconnected. If it is, a counter $m$ is incremented by 1. The WHILE loop ends either when $m$ exceeds $\mathscr{m}_k$, the number of edge disconnecting sets of size $k$, of the candidate graph or when every $c \in \mathcal{C}$ has been checked. If the WHILE loop exited because the number of $k$-edge-disconnecting set of $G$ was greater than the candidate's $m_k$, the coordinate $\mathscr{m}_k$ will take the value \textit{More}; if it exited because all $c \in \mathcal{C}$ were checked, depending on whether $m=\mathscr{m}_k$ or $m<\mathscr{m}_k$, the coordinate $\mathscr{m}_k$ will take the value \textit{Equal} or \textit{Less}.

\begin{algorithm}[H]
	\caption{$\mathcal{S}= Compare\_m_{k}$($\mathscr{m}$ ; $G$)} \label{Mkcoef}
	\begin{algorithmic}[1]
		\Require{$\mathscr{m}$; $G=(n,e)$}
		\Ensure{$\mathcal{S} = (cm_0,\cdots,cm_e)$}
		\State{$\mathcal{S}=(0,\dots,0)$} \label{l1}
		\For{$k = e:0$} \label{forbeg}
		\State{$m=0$} 
		\State {$\mathcal{C}= $ All possible sets of $k$ edges} \label{kedges}
		\While{$(m \leq \mathscr{m}_k)$ AND (\textit{there are $c \in C$ to be checked})} \label{whilebeg}
		\State{$G'=G - c$}
		\If{$G'$ is disconnected}
		\State{$m = m+1$}
		\EndIf
		\State{$c\leftarrow Next(c)$}
		\EndWhile \label{whileend}
		\If{$m>\mathscr{m}_k $} \label{ifmk}
		\State{$\mathcal{S}_k = More$}
		\ElsIf{$m=\mathscr{m}_k $}
		\State{$\mathcal{S}_k = Equal$}
		\Else \State {$\mathcal{S}_k = Less$}
		\EndIf \label{ifmk2}
		\label{forend2}\EndFor
		\State \Return {$\mathcal{S}$}
	\end{algorithmic}
\end{algorithm}

In those classes containing a single 2-connected graph, i.e complete graph, coefficients were not computed since there were no other candidates to compare with.

\subsection{Graph Properties Computation}

With the aim to assessing graph properties and determine any distinctive characteristics of UMRG, the following six properties were computed and analyzed for all the graphs considered in this study: girth, diameter, energy, Laplacian energy, Fiedler number and number of Hamiltonian cycles. All computations were performed in RStudio~\cite{RStudio}.  Both girth and diameter were computed via \textit{igraph} v1.3.0 library~\cite{igraph}. Energy and Laplacian energy were calculated using their respective definitions. The energy $E$ of a graph $G$ is defined as follows~\cite{balakrishnan2004energy}: 
\begin{defn}
	Let $A(G)$ be the adjacency matrix of a graph $G$ and $\lambda_1,\dots,\lambda_n$ its eigenvalues. Then, $E(G)=\sum_{i=1}^{n}|\lambda_i|$. 
\end{defn}
The Laplacian energy $LE$ of a graph $G$ is defined as follows~\cite{gutman2006laplacian}:
\begin{defn}
	Let $G$ be a $(n,e)$-graph and its Laplacian values $\mu_1,\dots,\mu_n$, then $LE(G) = \sum_{i=1}^{n}|\mu_i-{2e}/{n}|$.
\end{defn}
The Fiedler number, also known as \emph{algebraic connectivity}, is the second smallest eigenvalue of the Laplacian matrix of a graph~\cite{de2007old}.
Finally, the number of Hamiltonian cycles computation was performed using \textit{sna} v2.6~\cite{snalib} library.

\subsection{Regular Larger Order Graphs}

Since the analysis of graphs with order greater than $9$ requires an enormous computational effort, only the number of spanning trees $N_t$ was computed solely for regular graphs with $n \geq 10$. Since UMRG are $t$-optimal, the graph with the largest number of spanning trees was set as candidate to UMRG for each combination of $n$ and $e$. Also, the properties mentioned above were calculated for every regular graph.

\section{Results}

\subsection{Number of connected spanning subgraphs}  

The amount of regular and non regular $(n,e)$-graphs obtained from Nauty for each $n$, with $6\leq n \leq 14$, and $e$ are detailed in Table~\ref{tablagrafos1} of Appendix~\ref{apen1}.

As described above, coefficients $N_i$ were calculated for every $(n,{kn}/{2})$-graph with order $6 \leq n \leq 8$ and only for graphs maximizing the number of spanning trees when $n=9$. For the latter, their edge disconnecting vector $\mathscr{m}$ was obtained and compared against vectors of the other graphs in their respective class to check their optimality. Classes containing a single 2-connected graph were not explored, i.e.  $(n,n)$-graphs (cycles) and $(n,{n\cdot(n-1)}/{2})$-graphs (complete graphs) or classes were already it is known the UMRG, i.e. $(2h, 2h(h-1))$-graphs (complement of a matching). In Table~\ref{tablacoef} and Table~\ref{tablacoef2}, UMRG coefficients $N_i$ are presented by graph order and size, for all $i\geq 5$. When $i\leq4$, $N_i$ equals 0 since there are no connected subgraphs with 4 or less edges with 6 or more nodes. These values were omitted in the tables. For a matter of space, coefficients $N_5$ to $N_{16}$ are presented in Table~\ref{tablacoef} of Appendix~\ref{apen2} and coefficients $N_{17}$ to $N_{27}$ are presented in Table~\ref{tablacoef2} of Appendix~\ref{apen2}.

A UMRG was found for every $n$ and $e$ considered. Consistently with what had been stated by Bauer~\cite{bauer} and Boesch~\cite{boeschsurvey}, for every $n$ and $k$ considered in this work, UMRGs turned out to be simple $k$-regular graphs.  Among $(6,9)$ and $(8,16)$-graphs, the complete bipartite graphs were the optimal one in both classes. Boesch~\cite{boesch86b} had conjectured that uniformly most reliable $(n,n+3)$ graphs with 6 or more nodes always exist and they are elementary subdivisions of $K_{3,3}$. Wang~\cite{wangn3} proved it correct. Consistently, the UMRG $(6,9)$-graph found in this work is the $K_{3,3}$. Analogously, the $K_{4,4}$ is the UMRG among graphs with 8 nodes and 16 edges. The latter was conjectured by Boesch~\cite{boesch09} and later proved by Canale et al.~\cite{grafok44}.  Among graphs with 8 nodes and 12 edges, the Möbius graphs $M_4$, also called Wagner graph, is the optimal graph as it had been already proved in~\cite{wagner}. For the remaining values of $n$ and $e$, i.e $(7,14)$,$(8,20)$,$(8,24)$ and $(9,27)$, UMRG are respectively
$ \overline{C_3\cup C_4}, \overline{C_3\cup C_5}$, and $\overline{3C_3}$ while $(9,18)$ is 
depicted in Figure~\ref{n9k4}.

\begin{figure}
\centering
		\includegraphics[width=0.5\textwidth]{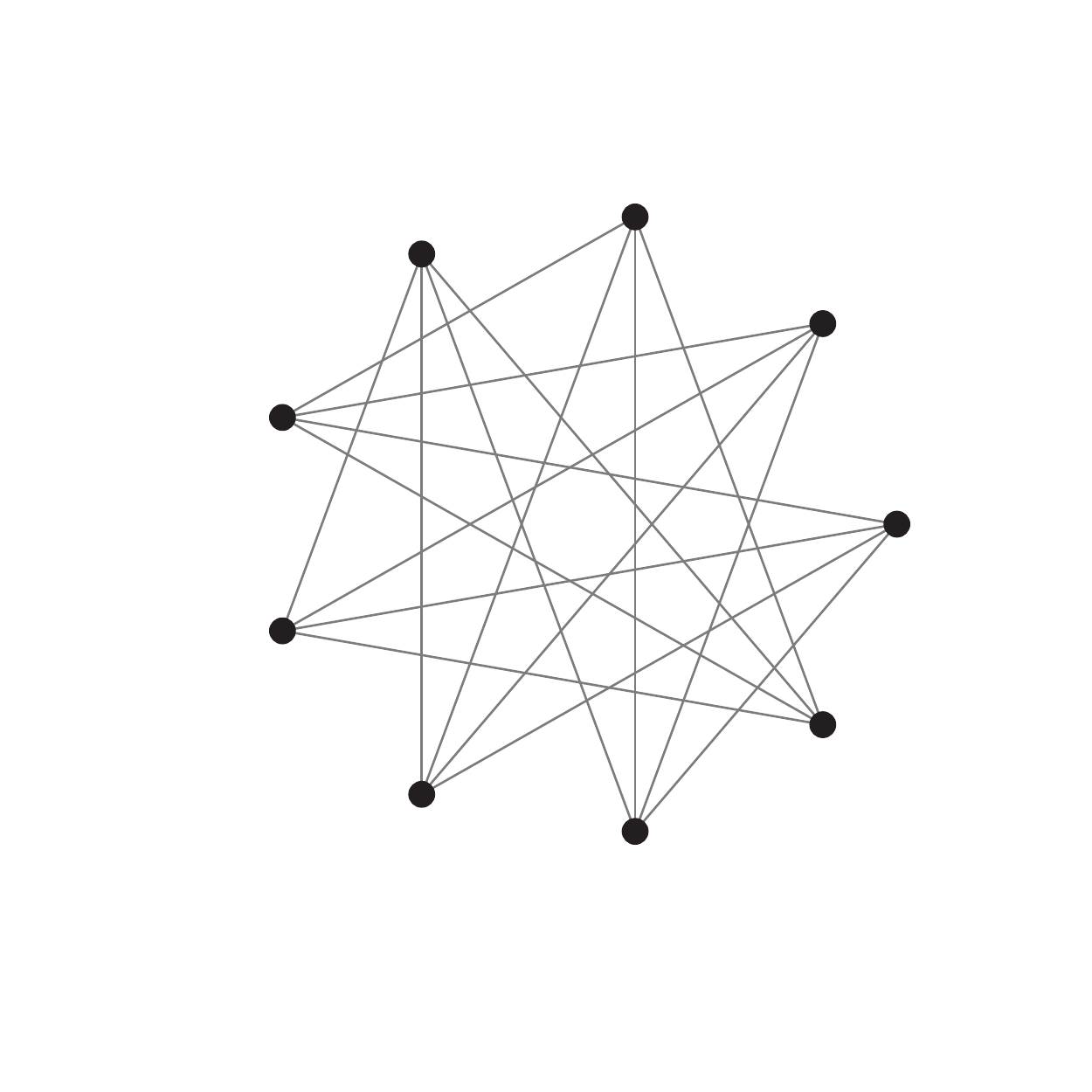}
		\caption{$n=9$, $e=18$}
		\label{n9k4}
\end{figure}

\subsection{Graph Properties}
Several properties were studied for every graph with $n\leq 9$, such as: diameter, girth, energy, Laplacian energy and Fiedler number, also known as algebraic connectivity.  

Among the graphs considered in this work, as the conjecture states, diameter was minimum for every UMRG. However, we already know that this condition is not fulfilled by other uniformly most reliable graphs as it was already proven by Ath \& Sobel~\cite{ath_sobel_2000}. Although they only compare 2 non-regular graphs in terms of their reliability polynomial, we can easily verify in modest computational time that not only is the first graph more reliable than the second one, but also it is the uniformly most reliable graph among all $(6,8)$-graphs.  

There exists another conjecture that had been around for several years stating that UMRG has maximum girth. Although Ath \& Sobel~\cite{ath_sobel_2000} suggested a potential counterexample using non-regular graphs, they did not prove that the graph with smaller girth was indeed UMRG. In this work, as a novelty, we found a UMRG with the lowest girth among all $(9,18)$-graphs, thus 4-regular. This graph was depicted in Figure~\ref{n9k4}. Among the 27015 graphs in this class, the maximum girth was 4 and it was attained by four non-regular graphs. On the other hand, the remaining graphs had a minimum girth of 3 within which we can find the UMRG. So this graph is a counterexample to Boesch conjecture that UMRG have maximum girth.  This counterexample could be one of a family of counterexamples. Indeed, consider the $k$-regular  graphs with $n_k=2k+1$ vertices and $e_k = k(2k+1)/2$ edges for even $k$. Then any of them should have girth 3 as we prove next.
\begin{prop}\label{prop}
If $k>2$ then, any  $k$-regular graph with $n_k=2k+1$ vertices and $e_k = k(2k+1)/2$ edges has girth 3. 
\end{prop}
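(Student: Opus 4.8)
The plan is to argue by contradiction: since $G$ is simple its girth is at least $3$, so it suffices to exhibit a triangle, and I will instead assume $G$ is triangle-free and derive a contradiction whenever $k>2$. (For odd $k$ no $k$-regular graph on the odd number $2k+1$ of vertices exists, so the statement is vacuous there; the content is the even case, where the quantity $k/2$ appearing below is an integer.) A useful preliminary remark is that $G$ must be connected, because $2k+1$ vertices cannot be partitioned among two or more $k$-regular graphs, each of which needs at least $k+1$ vertices.

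First I would fix a vertex $v$ and extract the local structure forced by triangle-freeness. Write $N=N(v)$ for its $k$ neighbours and $M$ for the remaining $2k+1-1-k=k$ vertices, i.e. the non-neighbours of $v$. Triangle-freeness makes $N$ an independent set, and a degree count then pins the picture down: each $u\in N$ sends exactly $k-1$ edges into $M$ (one edge goes to $v$ and none stay inside $N$), so there are $k(k-1)$ edges between $N$ and $M$; comparing this with the degree sum $k^2$ over the vertices of $M$ via $k^2=2\lvert E(M)\rvert+k(k-1)$ forces exactly $\lvert E(M)\rvert=k/2$ edges inside $M$. In particular each $u\in N$ fails to be adjacent to exactly one vertex of $M$; call it $f(u)\in M$.

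The key step is to apply triangle-freeness to the edges joining $N$ and $M$. Take any $y\in M$; its degree inside $M$ is at most $k-1<k$, so it has a neighbour $u\in N$. Now $u$ is adjacent to every vertex of $M$ except $f(u)$, so any neighbour of $y$ inside $M$ other than $f(u)$ would be a common neighbour of $u$ and $y$, creating a triangle on the edge $uy$. Hence every $M$-neighbour of $y$ equals $f(u)$, so $y$ has at most one neighbour inside $M$. As this holds for all $y\in M$, and $\lvert E(M)\rvert=k/2$, the set $M$ induces a \emph{perfect matching}. A parallel count shows $f\colon N\to M$ is a bijection: once every $y$ has exactly one $M$-neighbour, the number of $u\in N$ with $f(u)=y$ (that is, $u\not\sim y$) is $k-(k-1)=1$.

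The contradiction and the main obstacle now surface together. For a matched pair $\{y,y'\}\subseteq M$, the previous step says each of the $k-1$ neighbours of $y$ in $N$ maps to $y'$ under $f$; but $f$ is injective, so at most one vertex of $N$ can map to $y'$, forcing $k-1\le 1$, i.e. $k\le 2$, which contradicts $k>2$. The genuine difficulty is exactly that crude edge counting cannot close the argument: since $e_k=k(2k+1)/2\le\lfloor (2k+1)^2/4\rfloor$, Mantel's theorem does not by itself forbid a triangle-free graph of this size, so the proof must instead expose the rigid ``almost complete bipartite'' coupling between $N$ and $M$ and then play the perfect matching on $M$ off against the bijection $f$ to reach the bound $k\le 2$.
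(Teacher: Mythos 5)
Your proof is correct, and it takes a genuinely different route from the paper's. The paper argues edge-locally: it fixes an edge $u_1v_1$, spreads out the two stars $N(u_1)\setminus\{v_1\}$ and $N(v_1)\setminus\{u_1\}$ (which triangle-freeness forces to be disjoint independent sets), observes that exactly one vertex $w$ is left over, and then analyses the split of $w$'s neighbourhood into $h$ vertices on the $u$-side and $k-h$ on the $v$-side; avoiding triangles $u_i w v_j$ squeezes $h$ between $k-1$ and $1$, giving $k=2$. You instead argue vertex-locally: partition $V\setminus\{v\}$ into $N=N(v)$ and the $k$ non-neighbours $M$, count $\lvert E(M)\rvert=k/2$, and exploit the rigidity of the almost-complete bipartite graph between $N$ and $M$ (the ``miss'' map $f$) to force $M$ to induce a perfect matching with $f$ a bijection, after which a matched pair collapses $f$ onto a single value $k-1$ times. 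Your version buys a cleaner global structure (the counting identity, the matching, the bijection) and makes the parity hypothesis transparent --- the integrality of $k/2$ is exactly where even $k$ enters, and you correctly note the statement is vacuous for odd $k$; the paper's version is more economical in machinery, needing only the one exceptional vertex $w$ and a short inequality chase, but its handling of the boundary case $h=0$ is looser than your corresponding step (every $y\in M$ has an $N$-neighbour by a one-line degree count). Both proofs are complete and reach the same contradiction $k\le 2$.
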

\begin{proof}
By contradiction, suppose $G$ is a $k$-regular $n_k,e_K$ graph with girth greater than 3, and let $u_1$ and $v_1$ be two adjacent vertices. Then, $u_1$ and $v_1$ do not share any neighborhood otherwise the girth should be 3. Let  $u_2,\dots,u_k$  be the vertices adjacent with $u_1$ different from $v_1$, and $v_2,\dots,v_k$ the vertices adjacent with $v_1$ different from $u_1$, then $u_i \neq v_j$ for all $i,j$. See  left hand side graph in Figure~\ref{prop}.
By the assumption of girth greater than 3, vertices $u_2,\dots,u_k$ can not be adjacent with each other. The same happens for vertices $v_2,\dots,v_k$.

So the vertex set of $G$ consists of vertices $u_i$, $v_i$ and an extra vertex $w$. 
Suppose now that $w$ is adjacent with $h$ vertices $u_i$ and $k-h$ vertices $v_i$ as  seen in the middle of Figure~\ref{prop}.
First let us notice that  $h>0$. Indeed, if $h=0$, we will get the 3-cycle $v_1,w,v_2,v_1$ contradicting the assumption of girth greater than 3.
By symmetry we have that $h<k$
 Now, since $h>0$ let $u_i$ be adjacent with $w$. In order to avoid 3-cycles of the form $u_i,w,v_j,u_i$ we need each $u_i$ to be adjacent with vertices $v_j$ not adjacent with $w$. Therefore $k-2\leq (k-1)-(k-h)=h-1$, i.e. $k\leq h+1$ but $h+1\leq (k-1)+1=k$, so $h+1=k$, i.e. $h=k-1$. By symmetry $k-h = k-1$, i.e. $h=1$. Therefore $k-1=1$, i.e. $k=2$, contradicting the  hypothesis of $k>2$.\qed
\end{proof}

\newcommand{\grafo}{\foreach \x in {2,3,4}{
      \node (\x) at ( 0, 4-0.5*\x) [place] {};
      \node  at ( 0, 4-0.5*\x) [left] {$u_{ \x}$};
      \node (\x') at ( 4, 4-0.5*\x) [place] {};
      \node  at ( 4, 4-0.5*\x) [right] {$v_{\x}$};
    };
      \node (n) at ( 0, -0.2) [place] {};
      \node at ( 0, -0.2) [left] {$u_{k}$};
      \node (n') at ( 4, -0.2) [place] {};
      \node  at ( 4,-0.2) [right] {$v_{k}$};
      \node (u) at ( 1, 4) [place] {};
     \node (v) at ( 3, 4) [place] {};
      \node  at ( 1, 4) [left] {$u_1$};
      \node  at ( 3, 4) [right] {$v_1$};     
     \node at ( 0, 1.6)  {$\vdots$};
    \node at ( 4, 1.6)  {$\vdots$};
    
     \foreach \x in {2,3,4}{
 	  \draw (u) to (\x);
 	  \draw (v) to (\x');
 	  };
 	 \draw (u) to (n);
 	  \draw (v) to (n');
 	  
 	 \draw (u) to (v); }

\newcommand{\grafob}{
     \node (w) at ( 2, -1) [place] {};
    \node  at ( 2, -1) [below] {$w$};     
    \node(h)  at ( 0, 0.5) [place]{};     
    \node  at ( 0, 0.5) [left] {$u_{k-h+1}$};
    \node(kh)  at ( 4, 1) [place]{};     
    \node  at ( 4, 1) [right] {$v_{h+1}$};

     \foreach \x in {h,kh,n,n'}
 	  \draw (w) to (\x);

  \draw[dotted] (w)++(20:1) arc (20:35:2);
  \draw[dotted] (w)++(180-20:1) arc (180-20:180-29:2);
    
      \node at ( 0, 0.25)  {$\vdots$};
     \node at ( 4, 0.7)  {$\vdots$};
     }

\begin{figure}
    \begin{tikzpicture}
          [scale=0.8,place/.style={circle,draw=black,thick,fill=black, inner sep=0pt,minimum size=1mm}]
    \grafo
    \begin{scope}[xshift=6cm]
        \grafo
        \grafob
    \end{scope}
    \begin{scope}[xshift=12cm]
        \grafo
        \grafob
        \node(h')  at ( 4, 1.3) {};     
          \foreach \x in {2',h'}
     	  \draw (n) to (\x);
        \draw[dotted] (n)++(20:1.6) arc (20:40:1.6) node [midway,right, rotate=26]  {$k-2$};
        \node at ( 4, 0.7)  {$\vdots$};
    \end{scope}  
    \end{tikzpicture}    
%\caption{See Proposition~\ref{prop}'s proof.}
\end{figure}

%%%%%%%%%%%%
Notice the condition $k>2$ is needed since $C_5$ verifies the other hypothesis and has girth 5.
On the other hand, there are  non-regular $(n_k,e_k)$-graphs with girth 4. Indeed,  it is enough to describe a  bipartite $(n_k,e_k)$-graph. For instance, let $K_{k+1,k}$ be the complete bipartite graph with $k+1$ vertices in one  partition and $k$ in the other one. This graph has $2k+1$ vertices and $k(k+1)= k^2 +k$ edges, i.e. $(k^2+k)-e_k= k/2$ edges more than we need. So, it suffices to erase a matching of $k/2$ edges to obtain a connected $(n_k,e_k)$ graph  with girth 4. Figure~\ref{girth4} illustrates the idea. Therefore we have prove the following result.
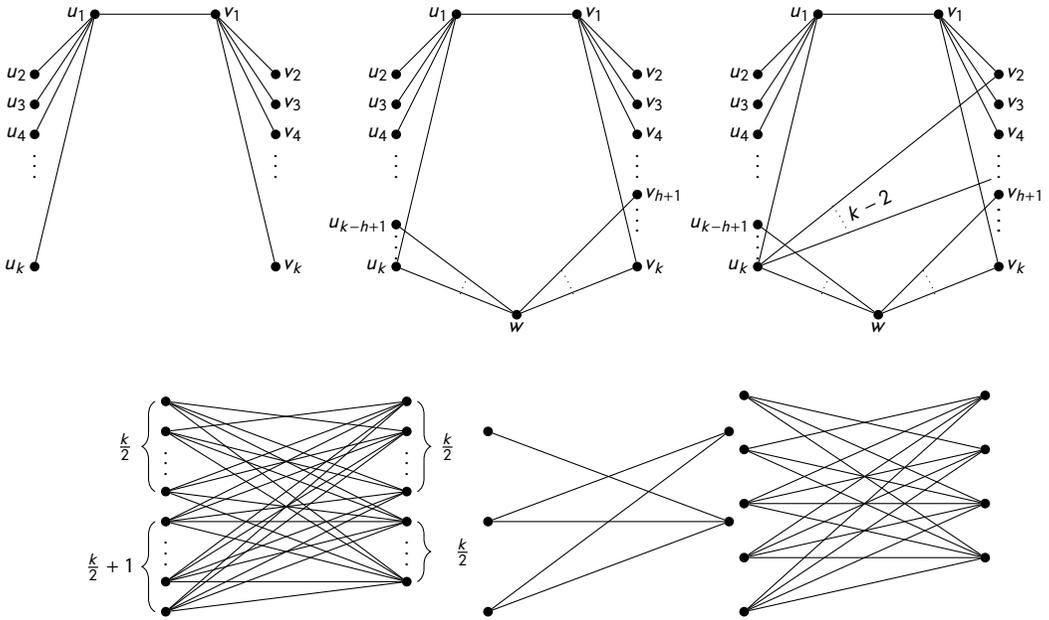
\begin{figure}
\centering

\begin{tikzpicture}
      [scale=0.8,place/.style={circle,draw=black,thick,fill=black, inner sep=0pt,minimum size=1mm}]

\foreach \x in {1,2,4,5,7,8}{
      \node (\x) at ( 0, 4-0.5*\x) [place] {};
      \ifthenelse{\x<8}{\node (\x') at ( 4, 4-0.5*\x) [place] {}}{};
    };
    \foreach \x/\y in {0/2.6,4/2.6,0/1.1,4/1.1}
     \node at ( \x, \y)  {$\vdots$};

\foreach \x in {1,2,4,5,7,8}      
\foreach \y in {1,2,4,5,7}
      \ifthenelse{{{\x<5}\AND{\NOT{\x=\y}}}\OR{\x>4}}{\draw (\x) to (\y')}{};

% \newcommand{\llave}{3}{\draw [decorate,decoration={brace,amplitude=5pt},xshift=-5pt,yshift=0pt]
% #1 -- #2 node [black,midway,xshift=-0.4cm] {#3};
% }
% \llave{( 0, 4-0.5*4)}{( 0, 4-0.5*1)}{$\frac{k}2$}
% \llave{( 0, 4-0.5*8)}{( 0, 4-0.5*5)}{$\frac{k}2+1$}
\draw [decorate,decoration={brace,amplitude=5pt},xshift=-5pt,yshift=0pt]
( 0, 4-0.5*4) -- ( 0, 4-0.5*1) node [black,midway,xshift=-0.4cm] 
{ $\frac{k}2$};
\draw [decorate,decoration={brace,amplitude=5pt},xshift=-5pt,yshift=0pt]
( 0, 4-0.5*8) -- ( 0, 4-0.5*5) node [black,midway,xshift=-0.6cm] 
{ $\frac{k}2+1$};
\draw [decorate,decoration={brace,amplitude=5pt},xshift=5pt,yshift=0pt]
( 4, 4-0.5*1) -- ( 4, 4-0.5*4) node [black,midway,xshift=0.4cm] 
{ $\frac{k}2$};
\draw [decorate,decoration={brace,amplitude=5pt},xshift=5pt,yshift=0pt]
( 4, 4-0.5*5)--( 4, 4-0.5*7)  node [black,midway,xshift=0.6cm] 
{ $\frac{k}2$};

\end{tikzpicture}   
\begin{tikzpicture}
      [scale=0.8,place/.style={circle,draw=black,thick,fill=black, inner sep=0pt,minimum size=1mm}]

\foreach \x in {0,1,2}{
      \node (\x) at ( 0, 1.5*\x) [place] {};
      \ifthenelse{\x>0}{\node (\x') at ( 4, 1.5*\x) [place] {}}{};
    };
    
\foreach \x in {0,1,2}      
\foreach \y in {1,2}
\ifthenelse{{{\x>1}\AND{\NOT{\x=\y}}}\OR{\x<2}}{\draw (\x) to (\y')}{};
\end{tikzpicture}  
\begin{tikzpicture}
      [scale=0.8,place/.style={circle,draw=black,thick,fill=black, inner sep=0pt,minimum size=1mm}]

\foreach \x in {0,1,2,3,4}{
      \node (\x) at ( 0, .9*\x) [place] {};
      \ifthenelse{\x>0}{\node (\x') at ( 4, .9*\x) [place] {}}{};
    };
    
\foreach \x in {0,1,2,3,4}      
\foreach \y in {1,2,3,4}
      \ifthenelse{{{\x>1}\AND{\NOT{\x=\y}}}\OR{\x<3}}{\draw (\x) to (\y')}{};
\end{tikzpicture}  
\caption{A $(2k+1,k^2+k/2)$-graph with girth $4$ on the left and cases $k=2,3$ on the middle and on the right. \label{girth4}}
\end{figure}

\begin{prop}
The maximum girth of a $k$-regular $(2k+1,k^2+k/2)$-graph is $5$ if $k=2$ and  $3$ if  $k\geq 4$. Besides, for any $k$ there are $(2k+1,k^2+k/2)$-graphs with girth $4$.
\end{prop}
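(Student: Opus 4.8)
The plan is to dispatch the two claims of the statement separately: first the girth of the \emph{$k$-regular} graphs, then the existence of a girth-$4$ member among \emph{all} $(2k+1,k^2+k/2)$-graphs. Before anything I would record a parity remark: a $k$-regular graph on $n_k=2k+1$ vertices can exist only when $kn_k$ is even, i.e.\ only for even $k$; likewise $e_k=k^2+k/2$ is an integer only for even $k$. Thus ``for any $k$'' means ``for any even $k$'', and throughout I restrict to even $k$. For such $k$ the family of $k$-regular $(n_k,e_k)$-graphs is nonempty (for example the circulant graph on $2k+1$ vertices with connection set $\{\pm1,\dots,\pm k/2\}$ is $k$-regular), so ``maximum girth'' is taken over a nonempty set.

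For the regular claim with $k\ge 4$ there is essentially nothing left to prove: Proposition~\ref{prop} already shows that \emph{every} $k$-regular $(n_k,e_k)$-graph contains a triangle and hence has girth exactly $3$, so the maximum girth over the (nonempty) family is $3$. For the remaining case $k=2$ the parameters are $n_2=5$ and $e_2=5$, and a $2$-regular simple graph is a vertex-disjoint union of cycles of length at least $3$; the only way to cover $5$ vertices by such cycles is a single $C_5$ (a split $3+2$ is impossible since $C_2$ is not simple). Hence the unique $2$-regular $(5,5)$-graph is $C_5$, of girth $5$, which gives the stated value.

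For the second claim I would use the bipartite construction sketched just before the statement. Set $H=K_{k+1,k}-M$, where $M$ is a matching of size $k/2$ (available because $k$ is even). Then $H$ has $k^2+k-k/2=k^2+k/2=e_k$ edges, so it is a $(n_k,e_k)$-graph. To see it is connected I would invoke edge-connectivity: $\lambda(K_{k+1,k})=\min(k+1,k)=k$, and since $|M|=k/2<k$ edges are deleted, $H$ stays connected. Being a subgraph of a bipartite graph, $H$ has no odd cycle, so its girth is at least $4$; and because $M$ saturates only $k/2$ of the $k+1$ vertices of the large side, at least $k/2+1\ge 2$ of them retain \emph{all} their edges, so any two such vertices together with two common neighbours form a surviving $4$-cycle. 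Hence $\mathrm{girth}(H)=4$.

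The serious content of the statement is carried entirely by Proposition~\ref{prop}, so the only genuinely delicate points here are the bookkeeping for the second claim: confirming the edge count equals $e_k$, that the deletion keeps the graph connected, and that a $4$-cycle truly survives. I expect the connectivity step to be the one most tempting to fudge; the clean way around it is the edge-connectivity inequality $|M|=k/2<k=\lambda(K_{k+1,k})$ rather than an ad hoc path-tracing argument, after which the surviving $4$-cycle follows from the count of unsaturated vertices on the large side. Checking the smallest instance ($k=2$ gives $K_{3,2}$ minus one edge) by hand confirms there are no degenerate exceptions.
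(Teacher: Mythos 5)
Your proposal is correct and follows essentially the same route as the paper: the $k\ge 4$ case is delegated to Proposition~\ref{prop}, the $k=2$ case is settled by $C_5$, and the girth-$4$ witness is $K_{k+1,k}$ minus a matching of $k/2$ edges, exactly the construction the paper sketches before stating the result. You merely supply details the paper leaves implicit (the parity restriction to even $k$, uniqueness of $C_5$ among $2$-regular $(5,5)$-graphs, and the connectivity of $K_{k+1,k}-M$ via $|M|=k/2<\lambda(K_{k+1,k})=k$), which is a welcome tightening rather than a different argument.
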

As a consequence either we have a negative answer to Question~\ref{question} or we have an infinite many counterexamples to Boesch conjecture on the girth.
\begin{prop}\label{Prop:either}
Either there is no UMRG $(2k+1, k^2+k/2)$-graph or it has not maximum girth.
\end{prop}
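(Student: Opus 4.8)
The plan is to derive Proposition~\ref{Prop:either} as an immediate logical consequence of Proposition~\ref{prop} together with the corollary to Boesch's Theorem recorded earlier in the excerpt. The key observation is that a UMRG is forced to be $t$-optimal and, by the first part of the Boesch corollary, to have certain extremal structural properties; but we have just shown in Proposition~\ref{prop} that \emph{every} $k$-regular $(2k+1,k^2+k/2)$-graph has girth exactly $3$, whereas there exist non-regular $(2k+1,k^2+k/2)$-graphs of girth $4$. If Conjecture~\ref{conjecture} held and a UMRG existed in this class, these two facts would be in direct conflict.

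First I would argue by contradiction: assume simultaneously that a UMRG $(2k+1,k^2+k/2)$-graph $G$ exists \emph{and} that it attains the maximum girth among all graphs of the same order and size, i.e.\ that Conjecture~\ref{conjecture} holds for this class. Second, I would invoke the bipartite construction given just after the proof of Proposition~\ref{prop} (the graph $K_{k+1,k}$ with a matching of $k/2$ edges removed) to certify that the maximum girth over all $(2k+1,k^2+k/2)$-graphs is at least $4$; hence, under the assumption, $G$ must itself have girth at least $4$. Third, I would note that being UMRG forces $\lambda(G)=\lfloor 2e/n\rfloor = k$ and, more to the point here, that the candidate for UMRG is a $k$-regular graph — so $G$ is $k$-regular. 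Fourth, I would apply Proposition~\ref{prop} to the $k$-regular graph $G$: for $k>2$ it has girth $3$, contradicting girth $\geq 4$. The cases $k=2$ and the parity requirement ($k$ even, so that $k/2$ is an integer and a $k$-regular graph of this order exists) would be handled by restricting attention to even $k\geq 4$, exactly the regime covered by Proposition~\ref{prop}.

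The structure of the statement is disjunctive (``either \dots\ or \dots''), so rather than a contradiction it is cleaner to prove the contrapositive of the hidden implication: \emph{if} a UMRG $(2k+1,k^2+k/2)$-graph does exist, then it must fail to have maximum girth. Concretely, a UMRG in this class is $t$-optimal and regular, hence $k$-regular, hence — by Proposition~\ref{prop} — of girth $3$; but girth $4$ is realizable in the class, so the UMRG does \emph{not} have maximum girth. This establishes the second disjunct under the hypothesis of the first, which is logically equivalent to the claimed disjunction.

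The only genuinely delicate point — and the step I expect to be the main obstacle — is justifying that a UMRG in this class is necessarily \emph{regular}, since Proposition~\ref{prop} speaks only about $k$-regular graphs. The excerpt asserts (citing Bauer and Boesch) that the computed UMRGs all turn out to be $k$-regular and that UMRG are $t$-optimal, and for $e=k n/2$ with $kn$ even a $k$-regular graph exists and is the natural $t$-optimal candidate; I would lean on this together with the ``almost regular'' heuristic and the known $t$-optimality results to close the gap. If one is unwilling to assume regularity outright, the fallback is to phrase the conclusion conditionally, exactly as the proposition does (``either no UMRG exists, or it lacks maximum girth''), so that the non-regular possibility is absorbed harmlessly into the first disjunct; this is in fact why the authors stated the result in disjunctive form, and it is what makes the proof short once Proposition~\ref{prop} is in hand.
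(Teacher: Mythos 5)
Your overall route is the same as the paper's: Proposition~\ref{Prop:either} is stated there with no separate proof, as an immediate consequence of the fact that every $k$-regular $(2k+1,k^2+k/2)$-graph has girth $3$ (for $k>2$) while girth $4$ is attained by a non-regular graph in the class. The gap is exactly the point you yourself flag as delicate --- regularity of a hypothetical UMRG --- and neither of your two ways of dealing with it works. ``Leaning on'' the empirical observation that the computed UMRGs turned out to be regular, or on an ``almost regular heuristic,'' is not a proof. Worse, your fallback is logically wrong: if a \emph{non-regular} UMRG of girth $4$ existed, it would not be ``absorbed into the first disjunct'' --- it would make both disjuncts false at once (a UMRG exists, and it does have maximum girth), refuting the proposition. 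The disjunctive phrasing protects against non-existence of a UMRG; it does not protect against a UMRG that escapes Proposition~\ref{prop} by failing to be regular.

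The regularity step can, however, be closed rigorously from material already quoted in the paper, and this is what the argument actually needs. By the corollary to Boesch's theorem, a UMRG $G$ is max-$\lambda$ with $\lambda(G)=\lfloor 2e/n\rfloor=k$. Since edge connectivity never exceeds minimum degree, $\delta(G)\geq k$; and since $\sum_v \deg(v)=2e=kn$ with $n$ vertices each of degree at least $k$, every degree equals $k$. Hence any UMRG in this class is $k$-regular (note that $k$ must be even for such a graph to exist on $2k+1$ vertices, and one should restrict to $k\geq 4$, since for $k=2$ the cycle $C_5$ is a UMRG of maximum girth, so the proposition is really about even $k\geq4$). With that in place your main line goes through: the UMRG has girth $3$ by Proposition~\ref{prop}, girth $4$ is realizable in the class, so the UMRG does not have maximum girth.
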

It is interesting to remark that although the $(9,18)$-graph in Figure~\ref{n9k4} does not  have maximum girth among the $(9,18)$-graphs, \emph{it does have minimum number of 3-cycles among the $4$-regular $(9,18)$-graphs}, as can be checked by hand.

When exploring UMRGs energy, a pattern was not found among these graphs. While in some classes reliable graphs present the lowest possible energy ($n=6$ and $e=9,12$; $n=8$ and $e=16,24$), other classes exhibit intermediate values. To compare energy between optimal graphs and non-optimal graphs, we estimate energy for every graph in each class and  computed the percentile of the UMRG energy. The $(7,14)$-UMRG energy percentile was 15\%; $(8,12)$-UMRG energy percentile was 66\%; $(8,20)$-UMRG energy percentile was 15 and $(9,18)$-UMRG energy percentile was 5\%. It is clear that there is no conspicuous pattern in energy values among UMRG.  We found a similar behaviour when assessing Laplacian energy. While the vast majority of UMRG attained the minimum value within their classes, $(8,12)$, $(8,20)$ and $(9,18)$ classes attained values corresponding to the percentiles: 3\%, 1\% and 0.02\%. Despite being very low values we cannot affirm that UMRG share a common characteristic.

Except for $(7,14)$-optimal graph, every UMRG attained the maximum Fiedler number. In the former case, the Fiedler number corresponded to the percentile 98\%. Although high, not the maximum as in the other cases.

\subsection{UMRG candidates of higher order}
For computational reasons, only coefficient $N_t$, the number of spanning trees, was computed for $k$-regular graphs with $10 \leq n\leq14$. Again, a single graph maximizing $N_t$ could be found in each class. We choose this graph as a candidate to UMRG although proofs must be done in order to ascertain this fact. 

In Figure~\ref{umrgn10},\ref{umrgn11},\ref{umrgn12}, \ref{umrgn13} and \ref{umrgn14} are depicted the potential UMRG with 10, 11, 12, 13 and 14 nodes respectively. Complete bipartite graphs were found to be candidates to UMRG in $(10,25)$,$(12,36)$ and $(14,49)$ classes. As proved in literature~\cite{petersen}, Petersen graph is the optimal graph among $(10,15)$-graphs (Fig.~\ref{n10k3}). Regarding graphs with order 14, coincidentally with Ath \& Sobel~\cite{ath_sobel_conj}, Heawood graph is $t$-optimal among $(14,21)$-graphs. Others graphs are shown in Figure~\ref{umrgn14}. 

\begin{figure}
	\centering
	\begin{subfigure}[b]{0.3\textwidth}
		\centering
		\includegraphics[width=\textwidth]{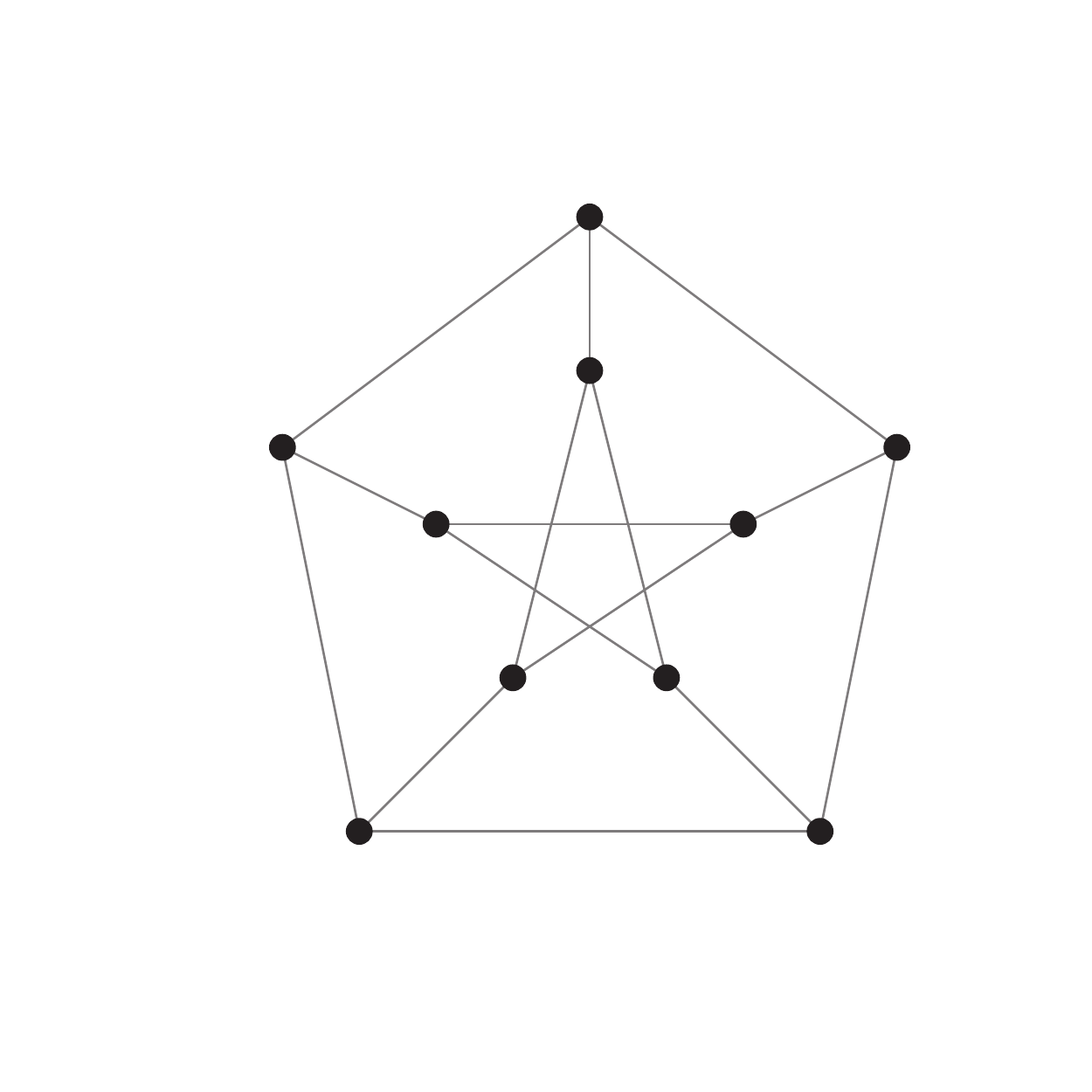}
		\caption{$n=10$, $e=15$ (Petersen graph) \\ $N_t=2000$}
		\label{n10k3}
	\end{subfigure}
	\hfill
	\begin{subfigure}[b]{0.3\textwidth}
		\centering
		\includegraphics[width=\textwidth]{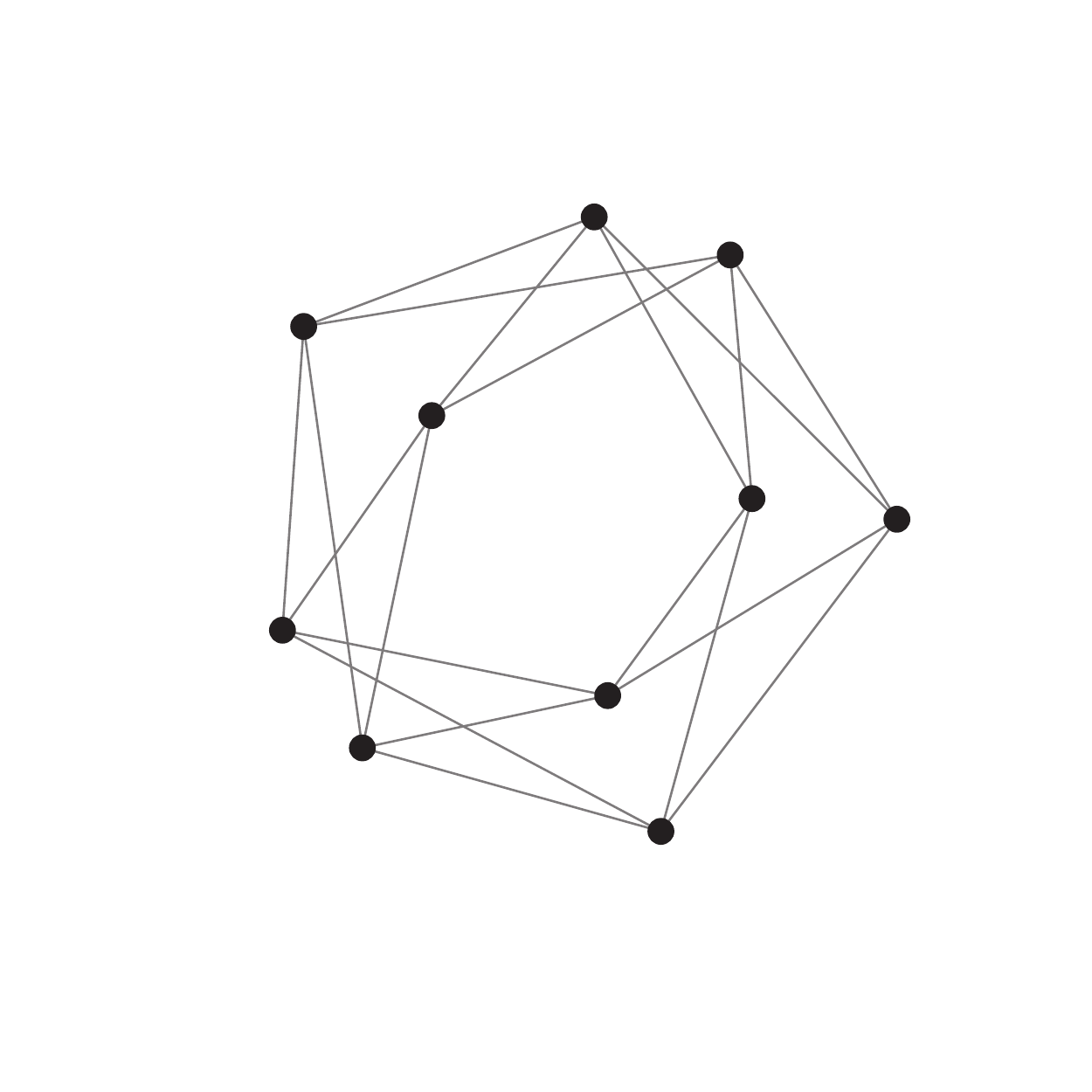}
		\caption{$n=10$, $e=20$ \\$N_t=40960$}
		\label{n10k4}
	\end{subfigure}
	\hfill
	\begin{subfigure}[b]{0.3\textwidth}
		\centering
		\includegraphics[width=\textwidth]{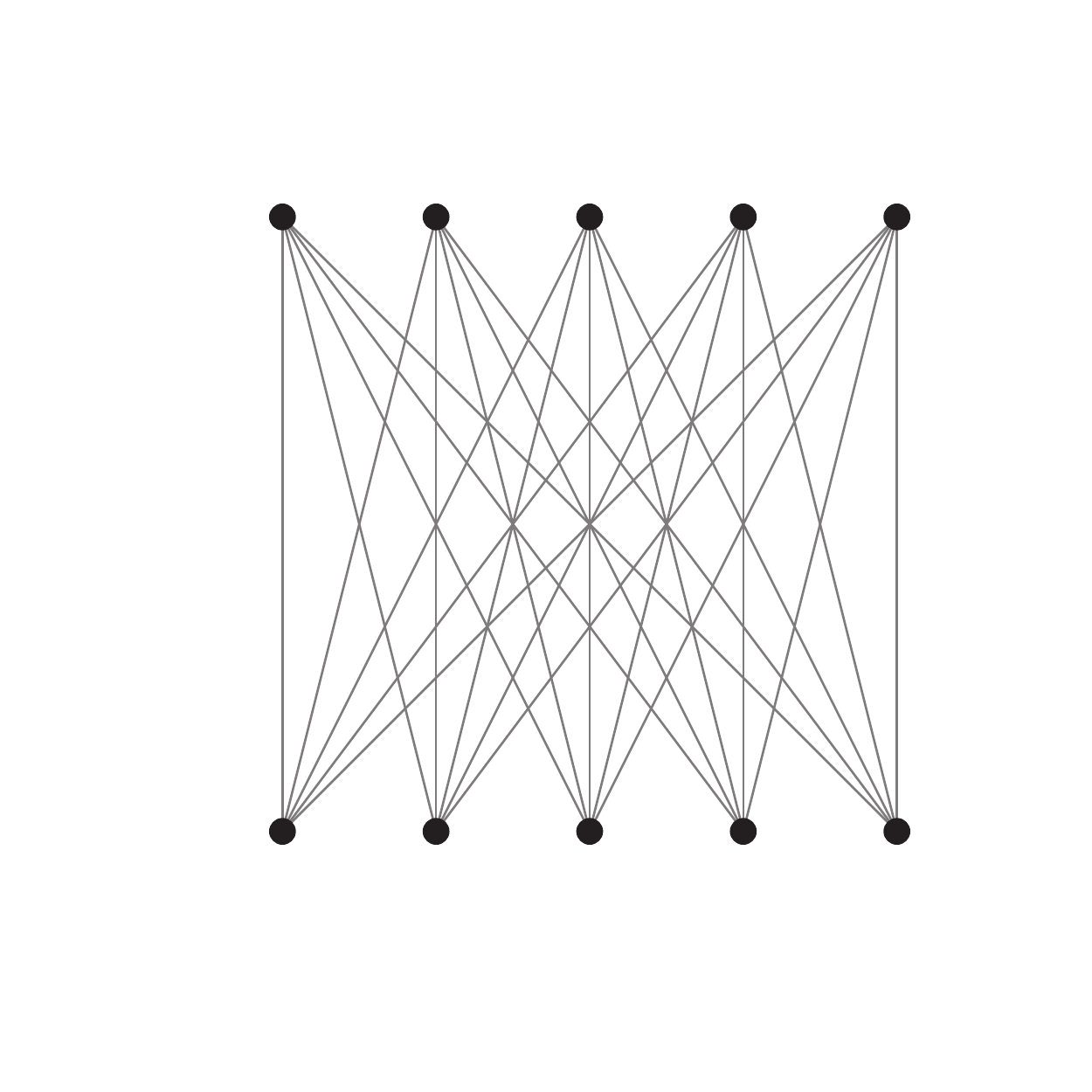}
		\caption{$n=10$, $e=25$ (graph $K_{5,5}$)\\­$N_t=390625$}
		\label{n10k5}
	\end{subfigure}
	\hfill
	\begin{subfigure}[b]{0.3\textwidth}
		\centering
		\includegraphics[width=\textwidth]{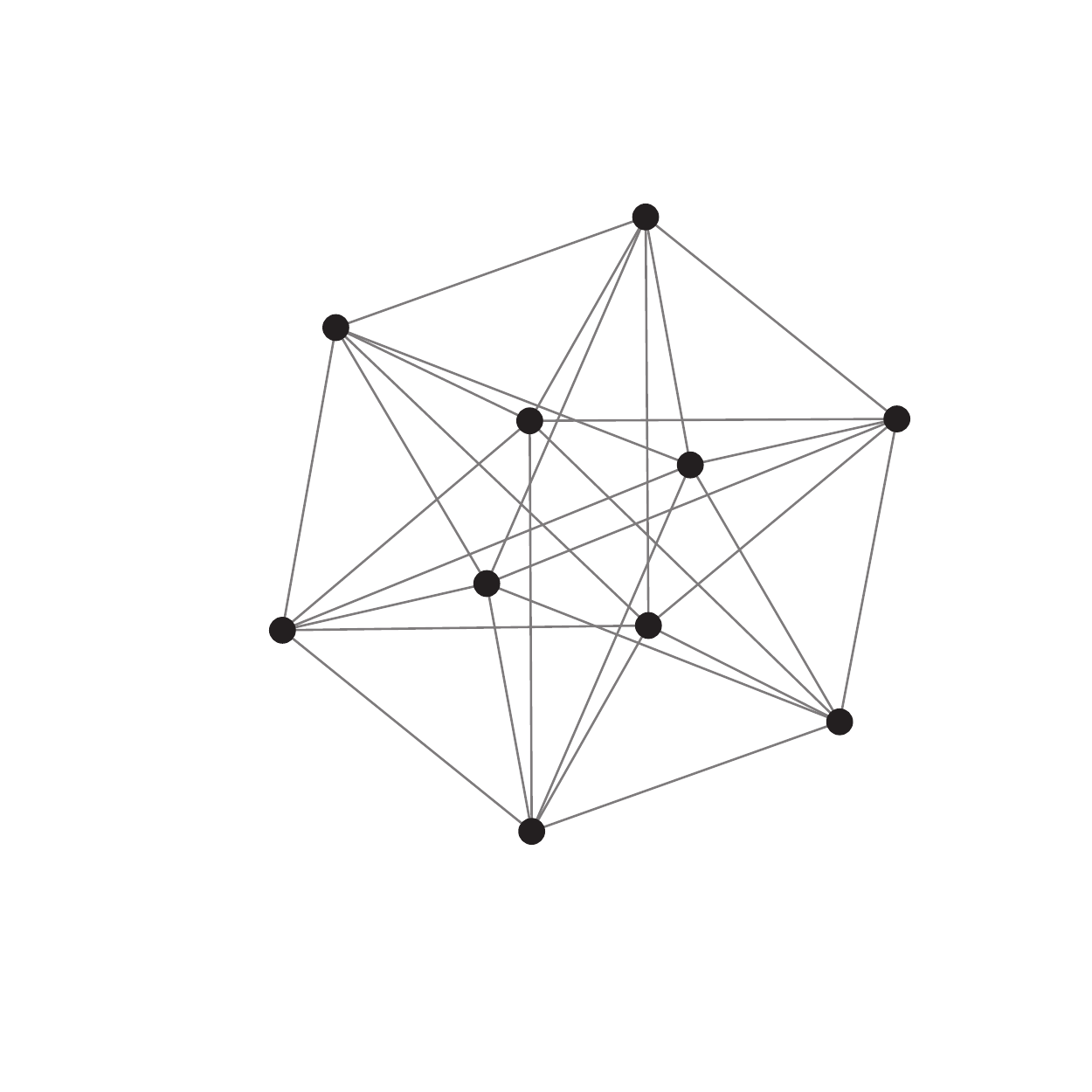}
		\caption{$n=10$, $e=30$\\ $N_t=2116800$}
		\label{n10k6}
	\end{subfigure}
	\hfill
	\begin{subfigure}[b]{0.3\textwidth}
		\centering
		\includegraphics[width=\textwidth]{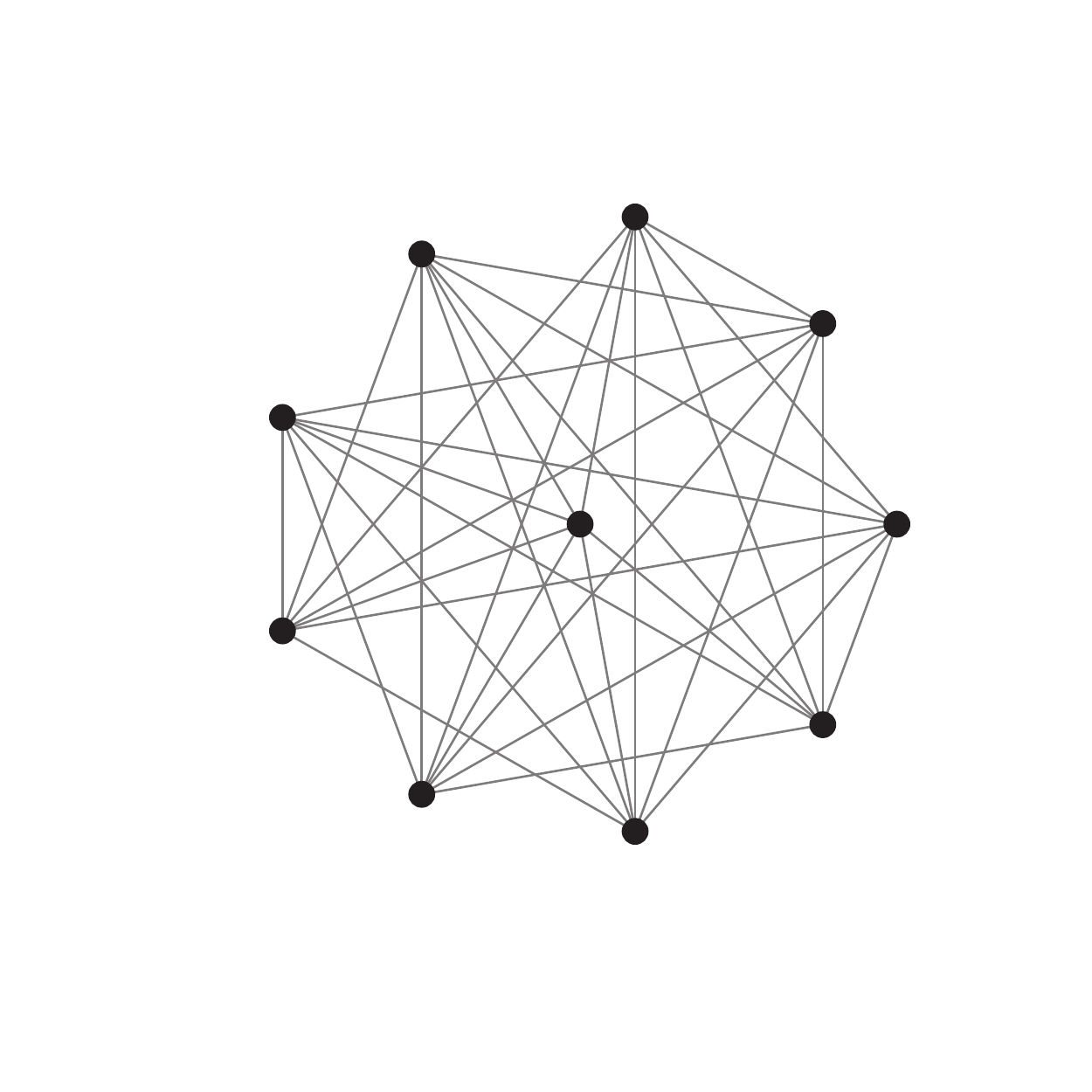}
		\caption{$n=10$, $e=35$ (graph $\overline{2C_3\cup C_4}$)\\ $N_t=9219840$}
		\label{n10k7}
	\end{subfigure}
% 	\hfill
% 	\begin{subfigure}[b]{0.3\textwidth}
% 		\centering
% 		\includegraphics[width=\textwidth]{n10k8}
% 		\caption{$n=10$, $e=40$ (graph $\overline{5K_2}$)\\ $N_t=32768000$}
% 		\label{n10k8}
% 	\end{subfigure}
%	\hfill
	\caption{Potential UMRG with $n=10$}
	\label{umrgn10}
\end{figure}

\begin{figure}
	\centering
	\begin{subfigure}[b]{0.3\textwidth}
		\centering
		\includegraphics[width=\textwidth]{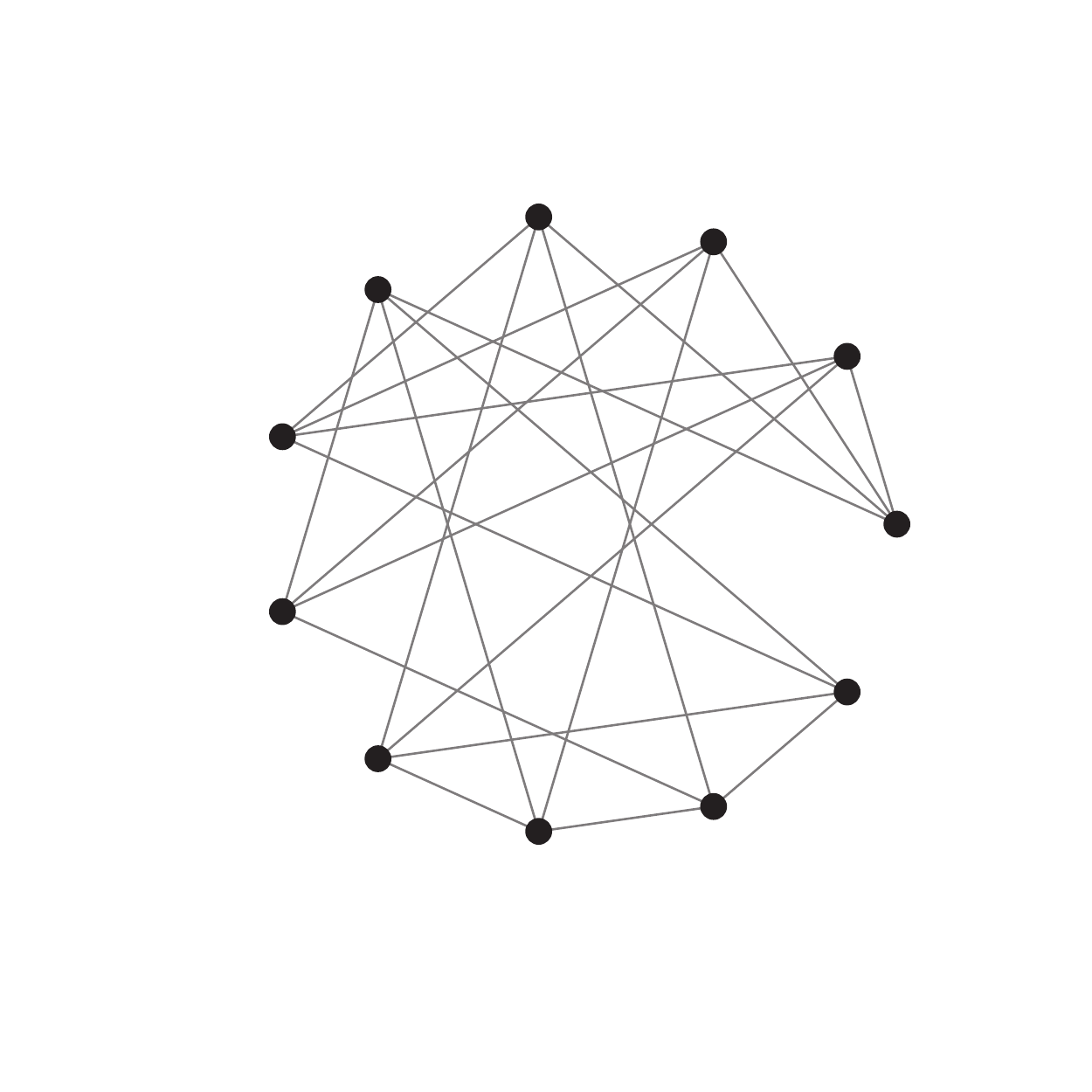}
		\caption{$n=11$, $e=22$\\ $N_t=130691$}
		\label{n11k4}
	\end{subfigure}
	\hfill
	\begin{subfigure}[b]{0.3\textwidth}
		\centering
		\includegraphics[width=\textwidth]{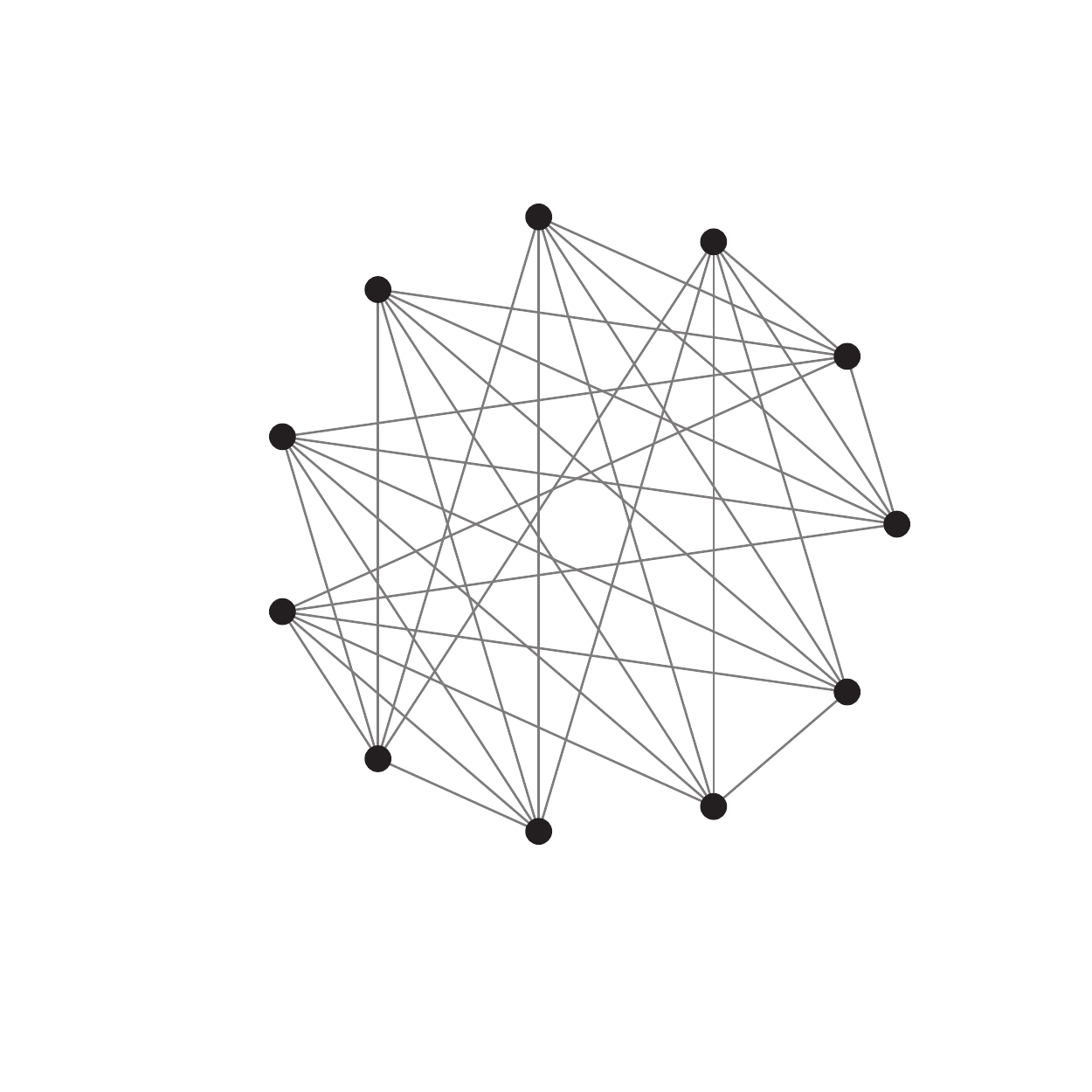}
		\caption{$n=11$, $e=33$\\ $N_t=11113200$}
		\label{n11k6}
	\end{subfigure}
	\hfill
	\begin{subfigure}[b]{0.3\textwidth}
		\centering
		\includegraphics[width=\textwidth]{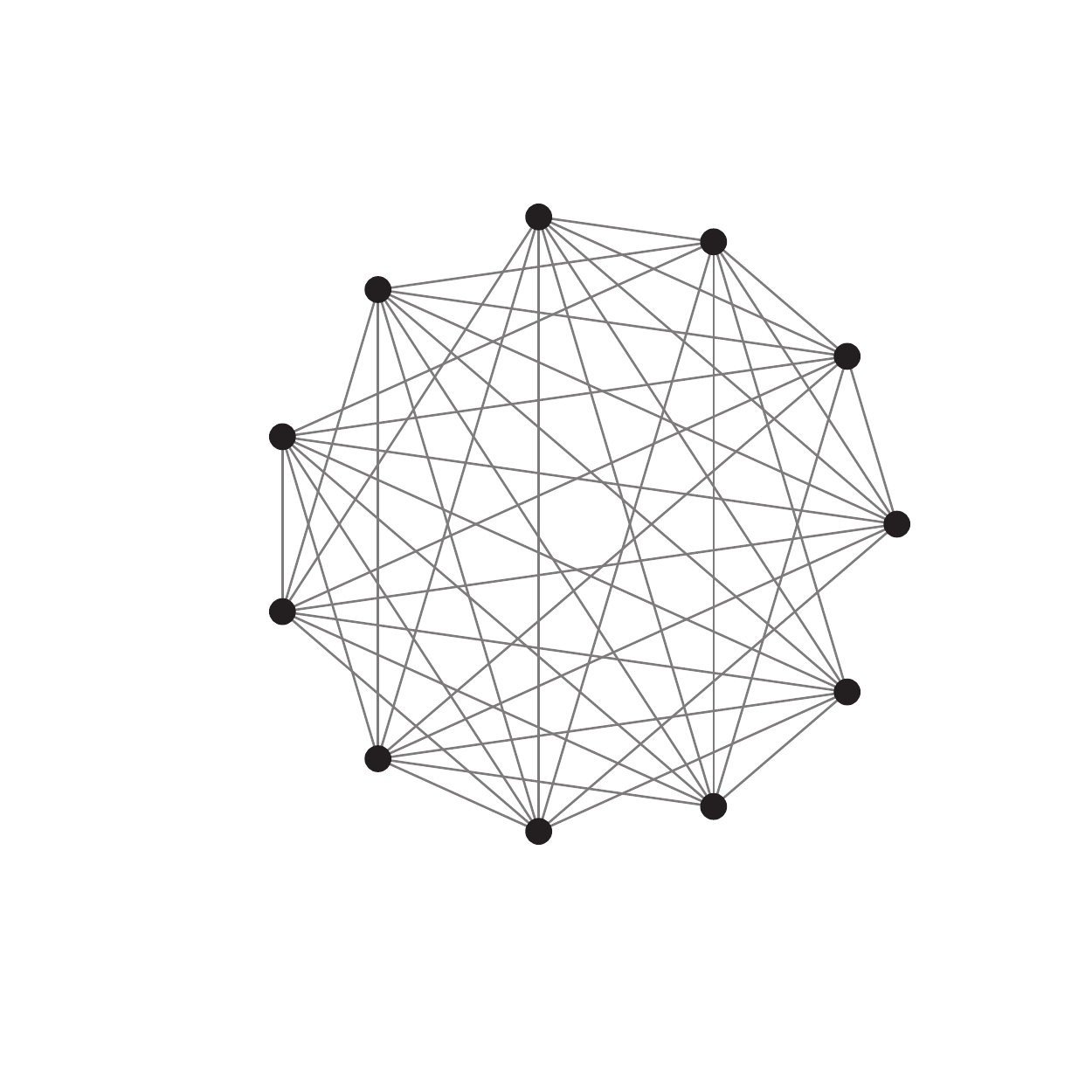}
		\caption{$n=11$, $e=44$ (graph $\overline{2C_3\cup C_5}$)\\ $N_t=227127296$}
		\label{n11k8}
	\end{subfigure}
	\hfill
	\caption{Potential UMRG with $n=11$}
	\label{umrgn11}
\end{figure}

\begin{figure}
	\centering
	\begin{subfigure}[b]{0.3\textwidth}
		\centering
		\includegraphics[width=\textwidth]{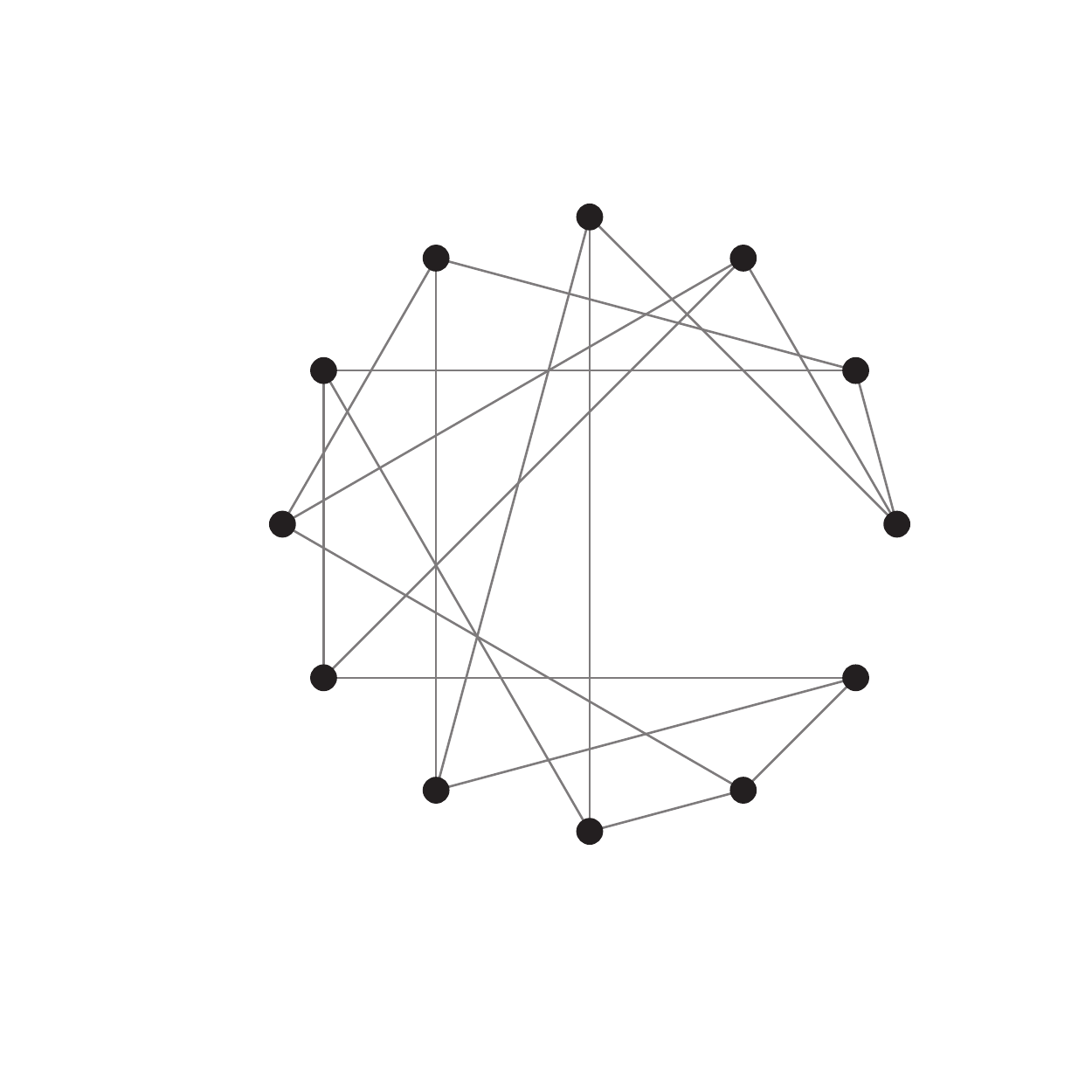}
		\caption{$n=12$, $e=18$\\ $N_t=9800$}
		\label{n12k3}
	\end{subfigure}
	\hfill
	\begin{subfigure}[b]{0.3\textwidth}
		\centering
		\includegraphics[width=\textwidth]{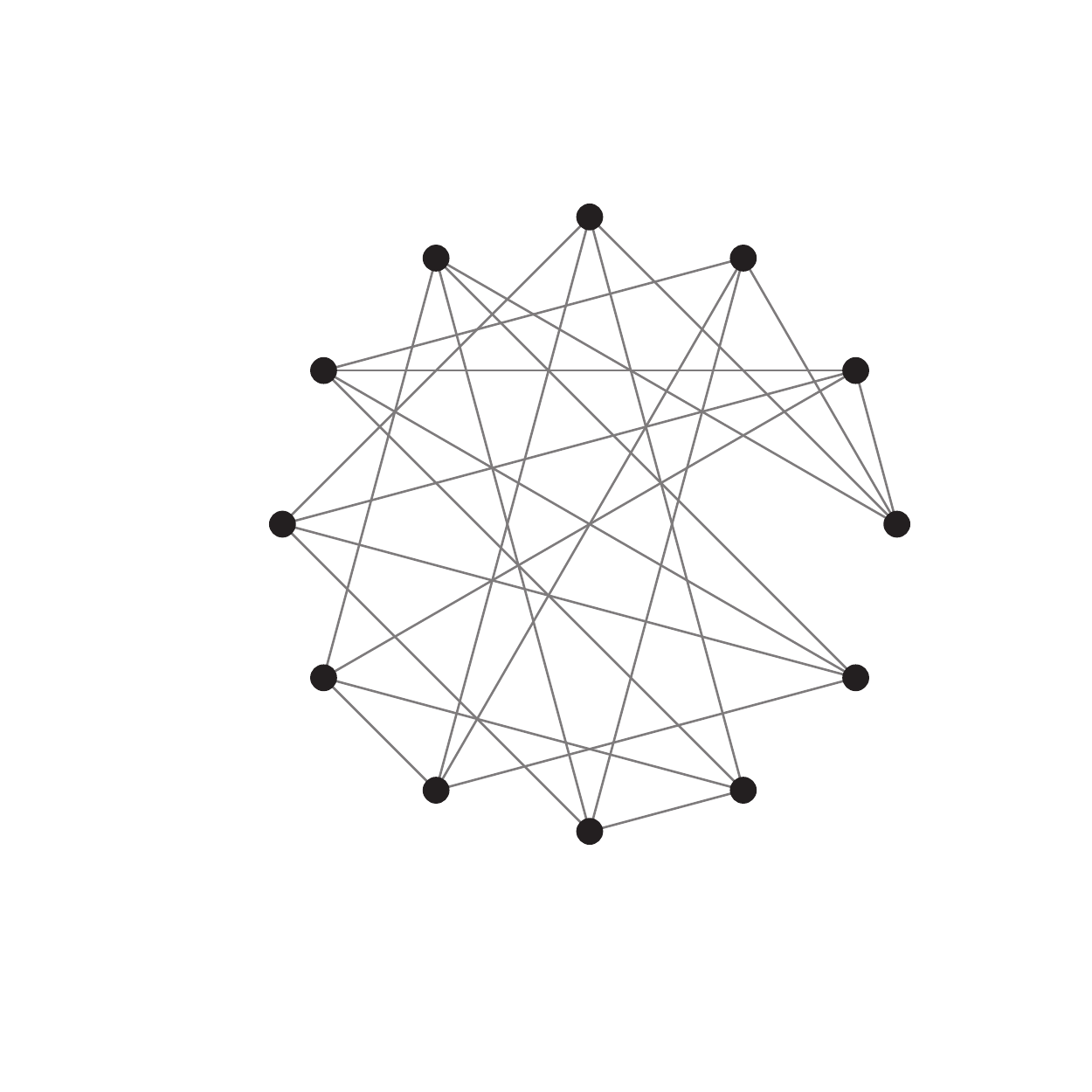}
		\caption{$n=12$, $e=24$\\ $N_t=428652$}
		\label{n12k4}
	\end{subfigure}
	\hfill
	\begin{subfigure}[b]{0.3\textwidth}
		\centering
		\includegraphics[width=\textwidth]{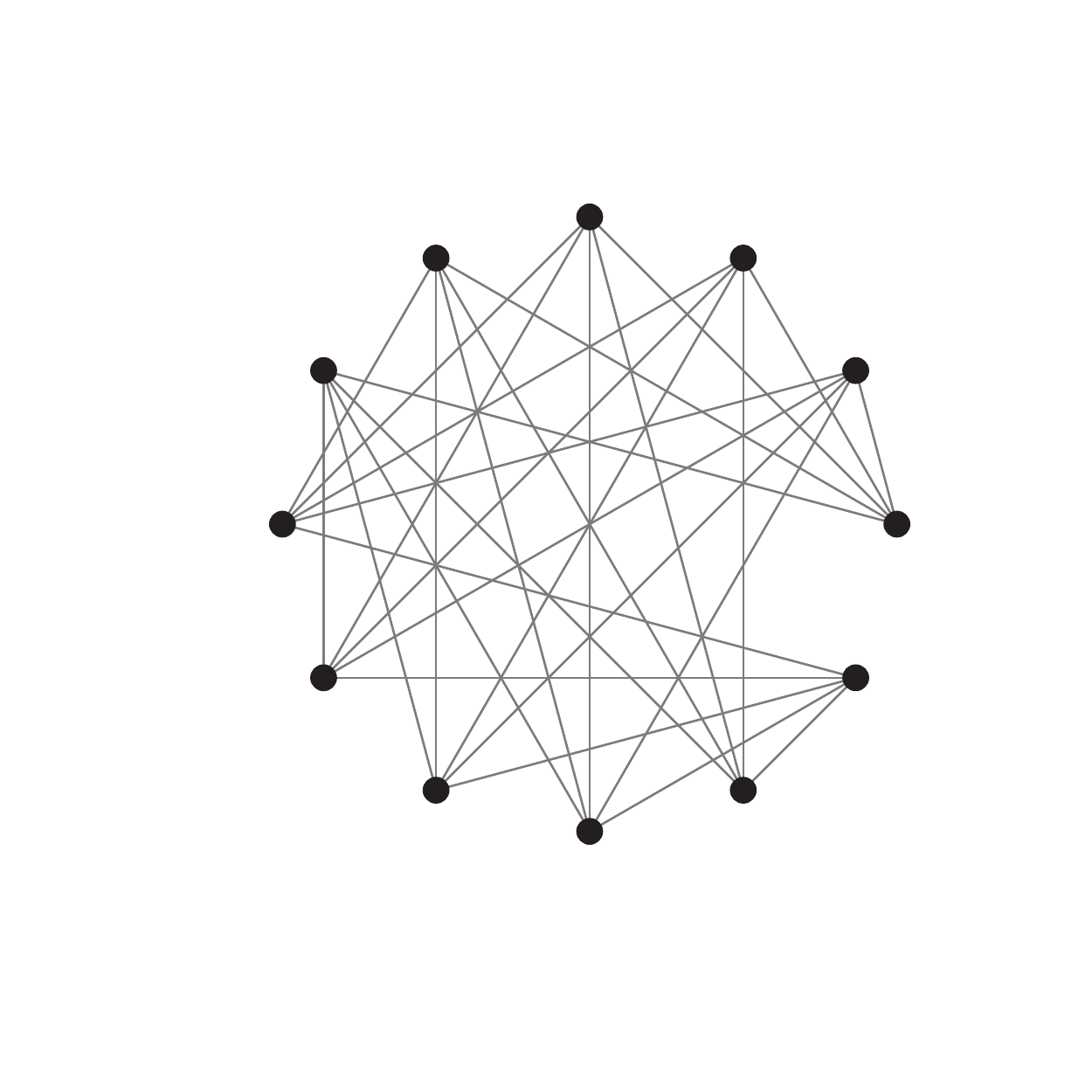}
		\caption{$n=124$, $e=30$\\ $N_t=6635520$}
		\label{n12k5}
	\end{subfigure}
	\hfill
		\begin{subfigure}[b]{0.3\textwidth}
		\centering
		\includegraphics[width=\textwidth]{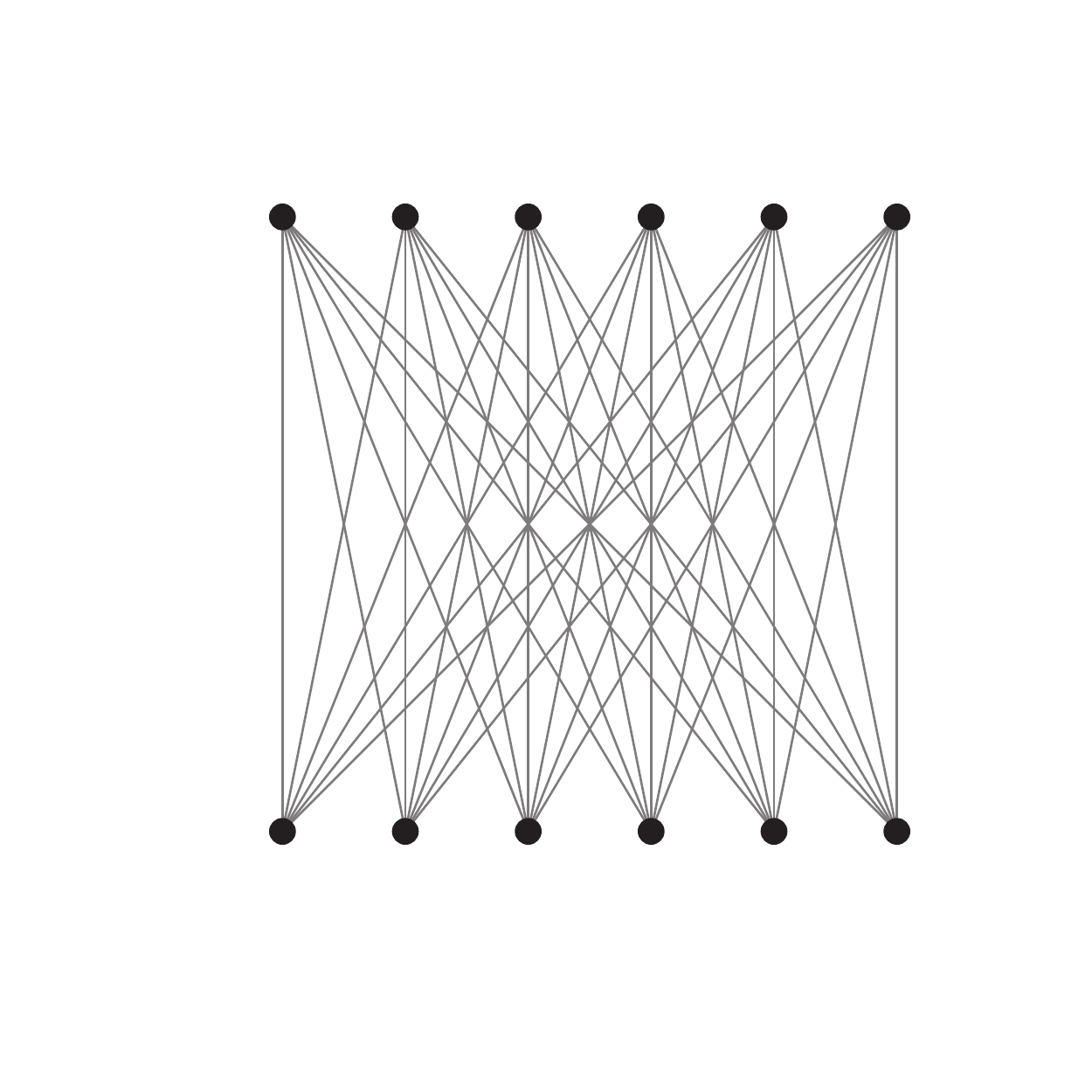}
		\caption{$n=12$, $e=36$
		(graph $K_{6,6}$)\\ $N_t=60466176$}
		\label{n12k6}
	\end{subfigure}
	\hfill
			\begin{subfigure}[b]{0.3\textwidth}
		\centering
		\includegraphics[width=\textwidth]{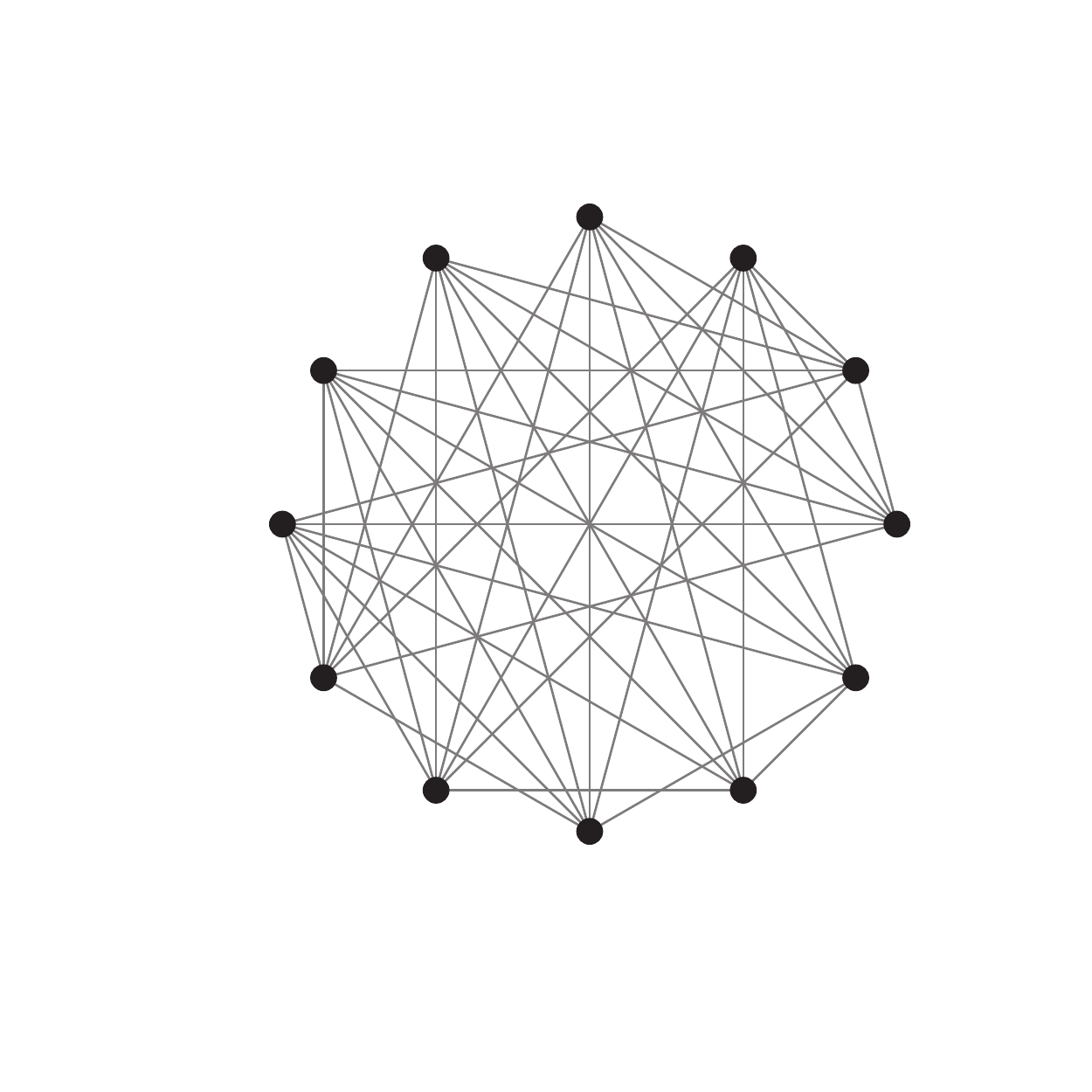}
		\caption{$n=12$, $e=42$ \\ $N_t=341251729$}
		\label{n12k7}
	\end{subfigure}
	\hfill
				\begin{subfigure}[b]{0.3\textwidth}
		\centering
		\includegraphics[width=\textwidth]{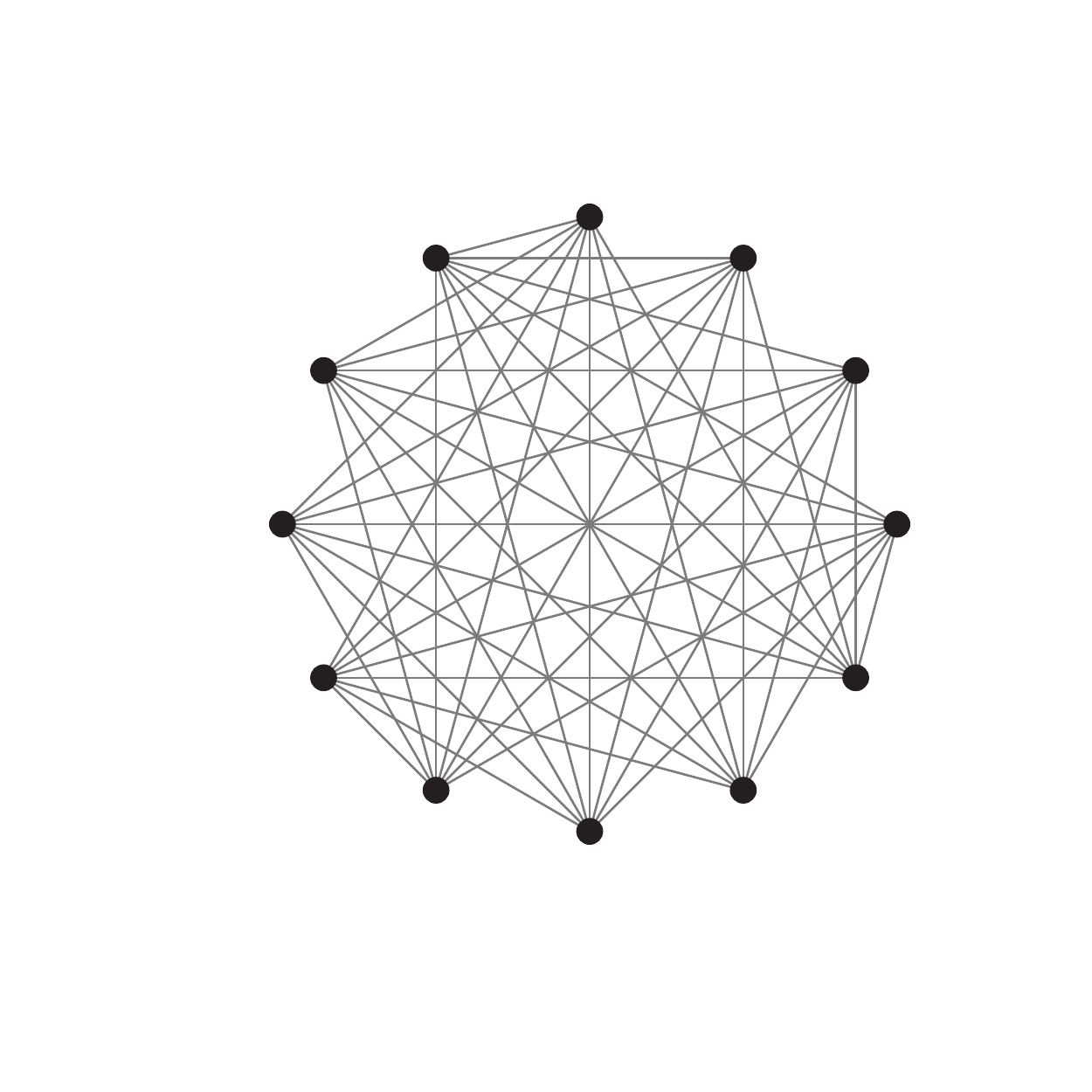}
		\caption{$n=12$, $e=48$ \\ $N_t=1610612736$}
		\label{n12k8}
	\end{subfigure}
	\hfill
	\begin{subfigure}[b]{0.3\textwidth}
		\centering
		\includegraphics[width=\textwidth]{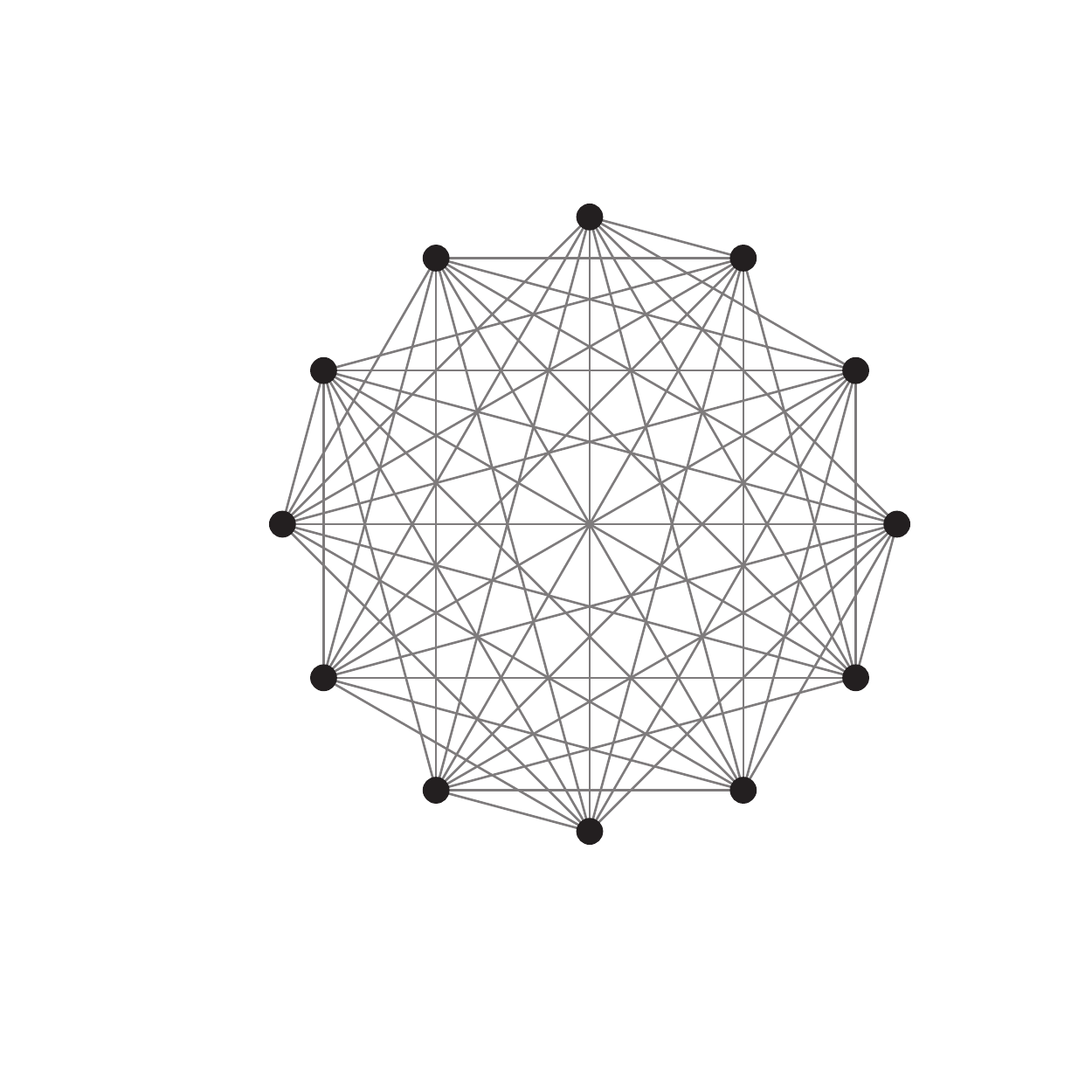}
		\caption{$n=12$, $e=54$ (graph $\overline{4C_3}$)\\$N_t=6198727824$}
		\label{n12k9}
	\end{subfigure}
%	\hfill
%	\begin{subfigure}[b]{0.3\textwidth}
%		\centering
%		\includegraphics[width=\textwidth]{}
%		\caption{$n=12$, $e=60$ (graph $\overline{6K_2}$) \\$N_t=2.07\times10^{10}$}
%		\label{n12k10}
%	\end{subfigure}
%	\hfill
	\caption{Potential UMRG with $n=12$}
	\label{umrgn12}
\end{figure}

\begin{figure}
	\centering
	\begin{subfigure}[b]{0.3\textwidth}
		\centering
		\includegraphics[width=\textwidth]{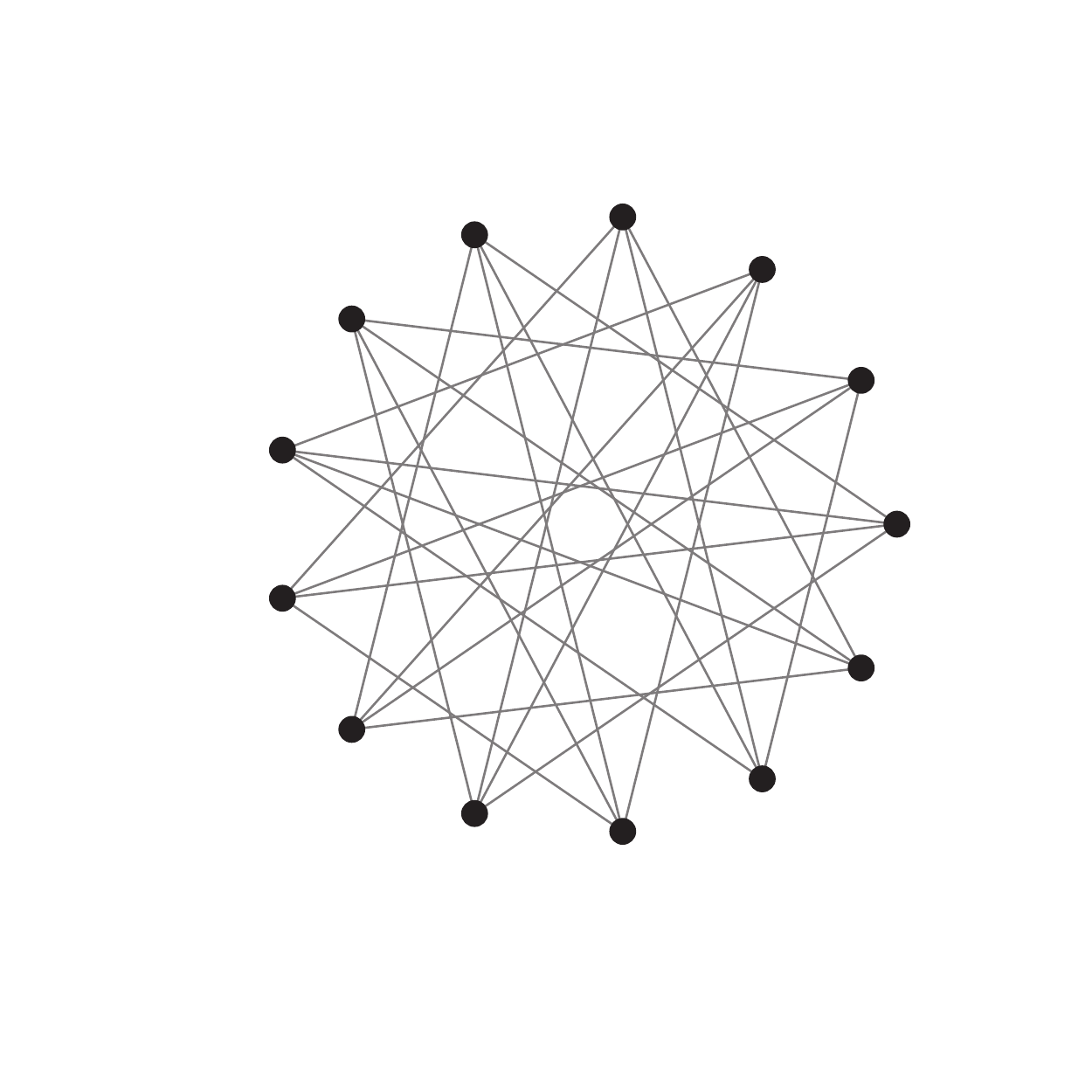}
		\caption{$n=13$, $e=26$\\ $N_t=1373125$}
		\label{n13k4}
	\end{subfigure}
	\hfill
	\begin{subfigure}[b]{0.3\textwidth}
		\centering
		\includegraphics[width=\textwidth]{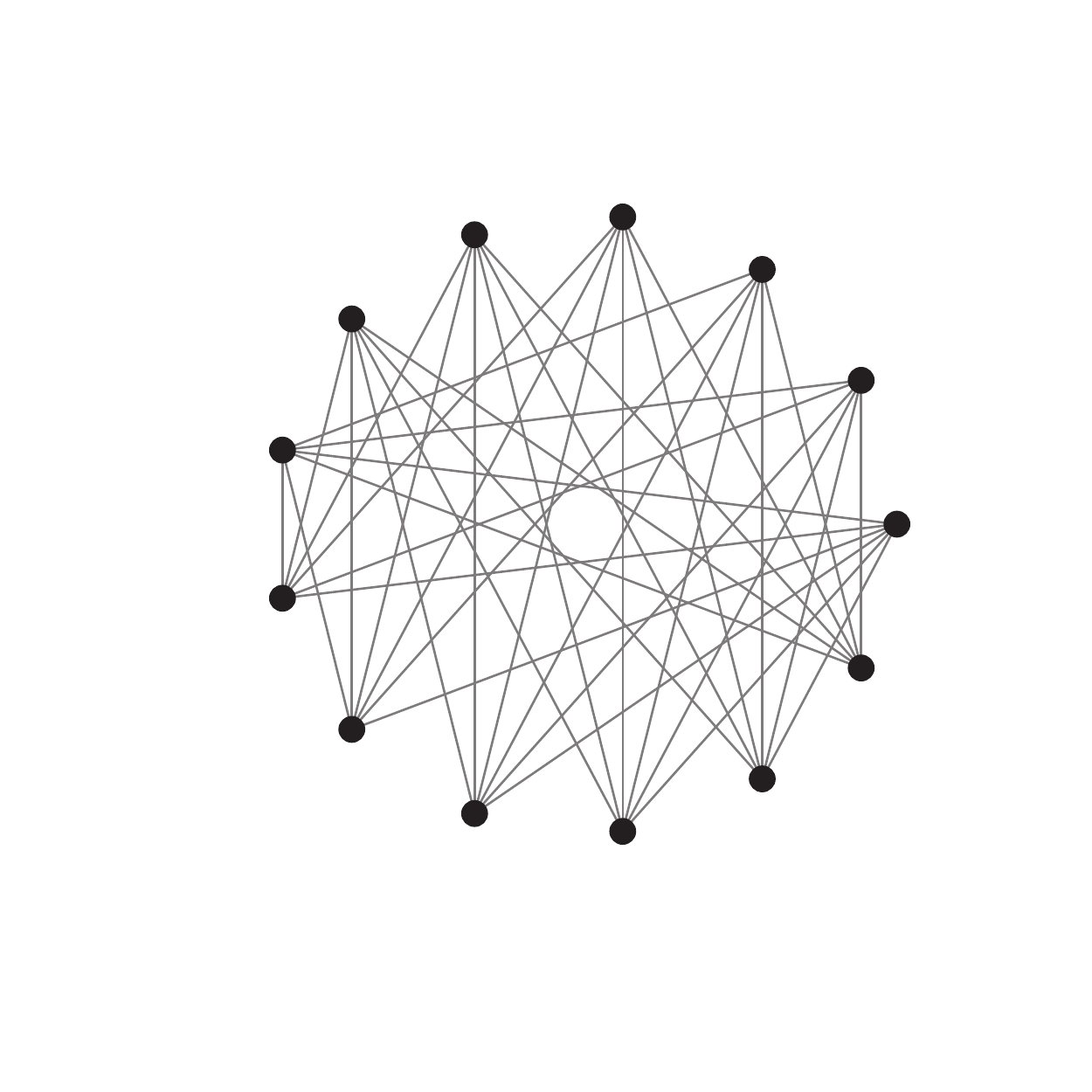}
		\caption{$n=13$, $e=39$\\ $N_t=300056400$}
		\label{n13k6}
	\end{subfigure}
	\hfill
	\begin{subfigure}[b]{0.3\textwidth}
		\centering
		\includegraphics[width=\textwidth]{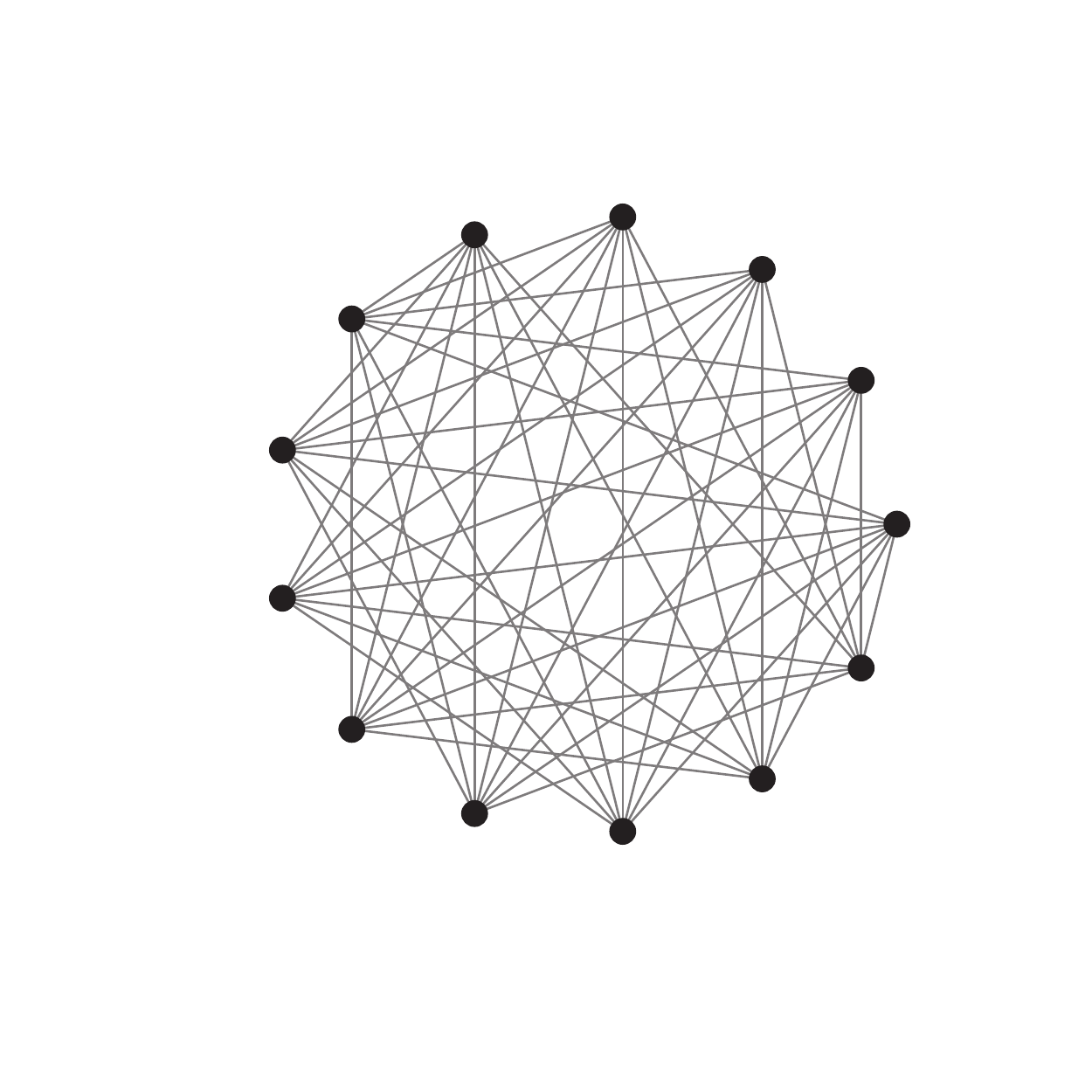}
		\caption{$n=13$, $e=52$\\ $N_t\approx 1.12\times10^{10}$}
		\label{n13k8}
	\end{subfigure}
	\hfill
	\begin{subfigure}[b]{0.3\textwidth}
		\centering
		\includegraphics[width=\textwidth]{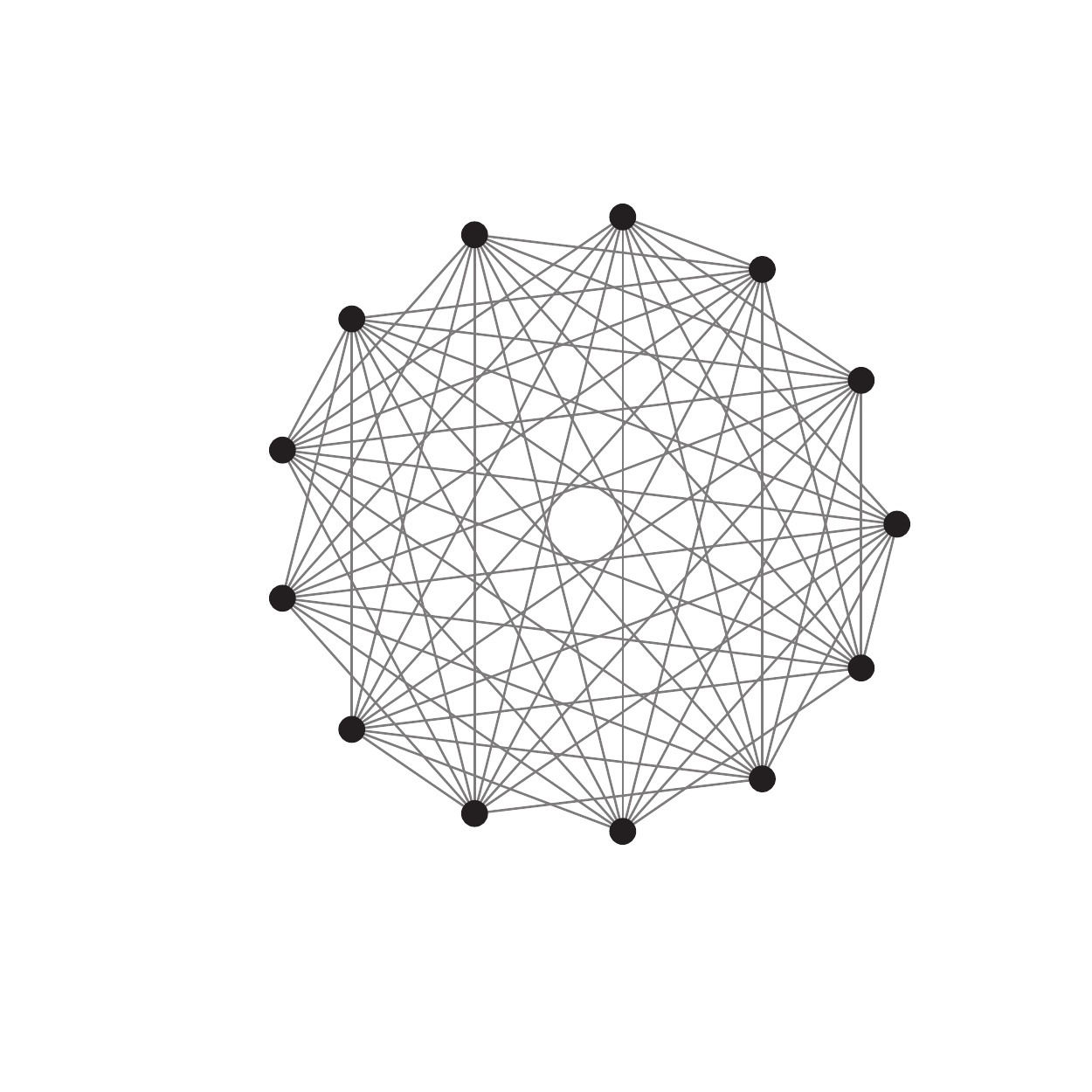}
		\caption{$n=13$, $e=65$ (graph $\overline{3C_3\cup C_4}$) \\ $N_t\approx1.84\times10^{11}$}
		\label{n13k10}
	\end{subfigure}
	\hfill
	\caption{Potential UMRG with $n=13$}
	\label{umrgn13}
\end{figure}

\begin{figure}
	\centering
	\begin{subfigure}[b]{0.3\textwidth}
		\centering
		\includegraphics[width=\textwidth]{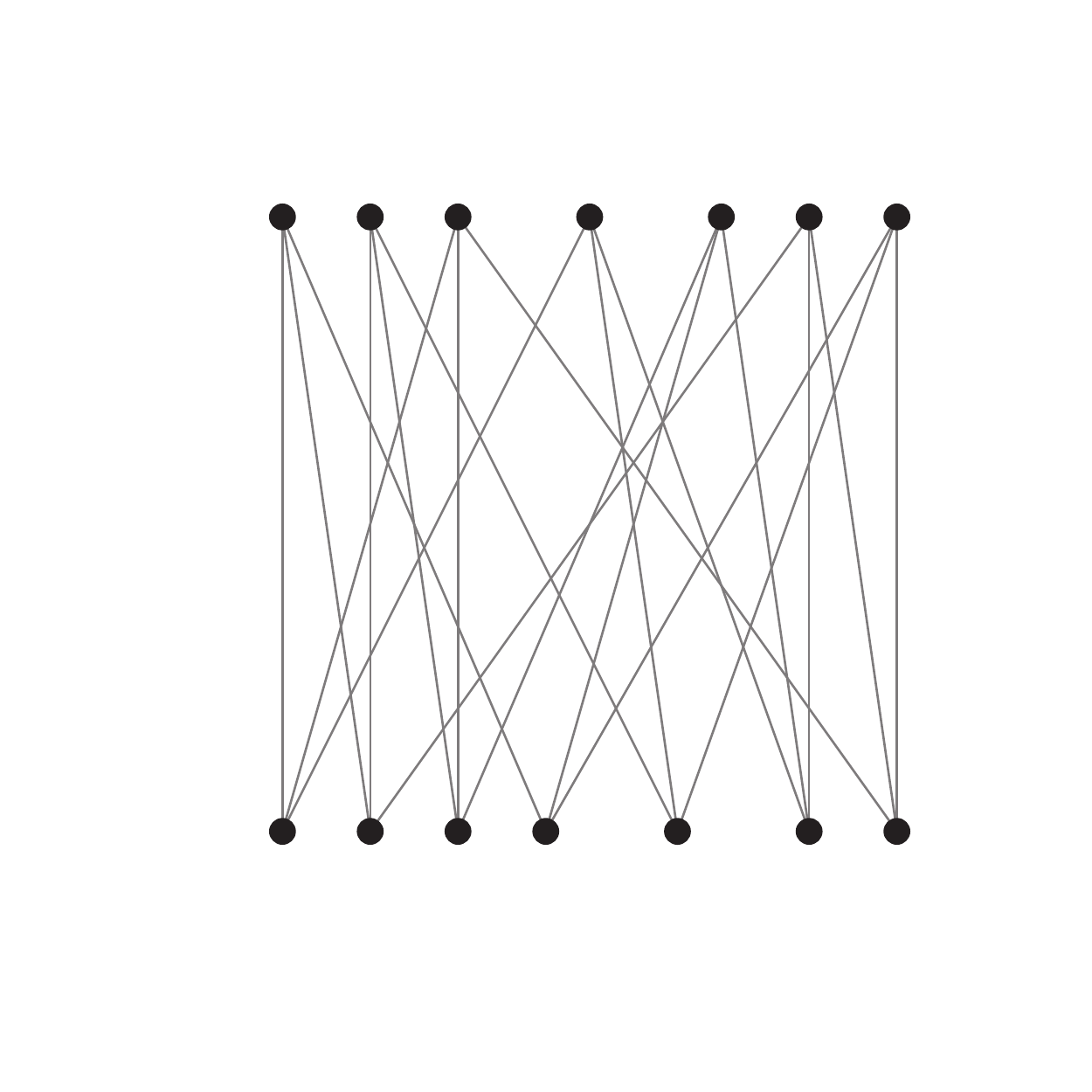}
		\caption{$n=14$, $e=21$\\ $N_t=50421$}
		\label{n14k3}
	\end{subfigure}
	\hfill
	\begin{subfigure}[b]{0.3\textwidth}
		\centering
		\includegraphics[width=\textwidth]{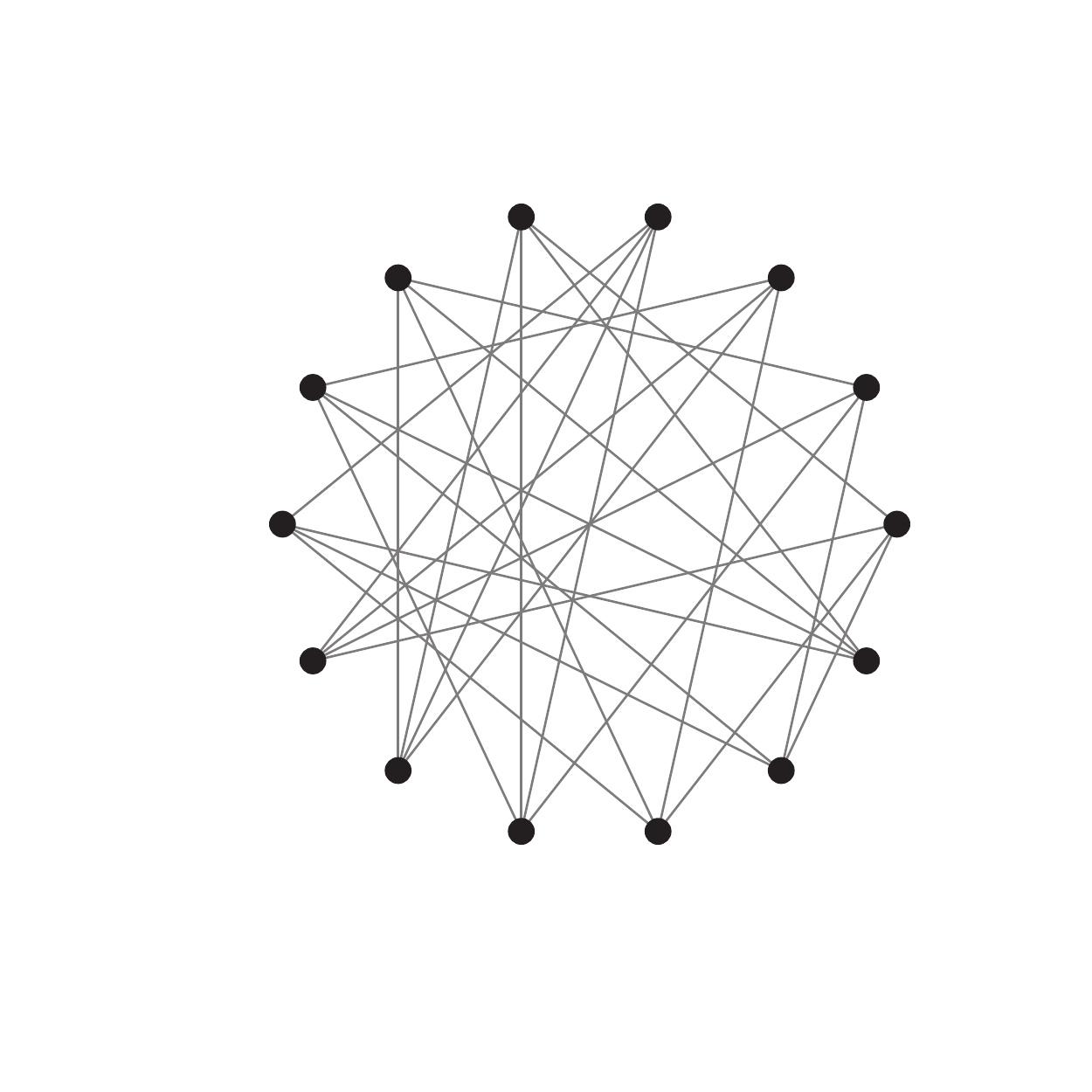}
		\caption{$n=14$, $e=28$\\ $N_t=4423680$}
		\label{n14k4}
	\end{subfigure}
	\hfill
	\begin{subfigure}[b]{0.3\textwidth}
		\centering
		\includegraphics[width=\textwidth]{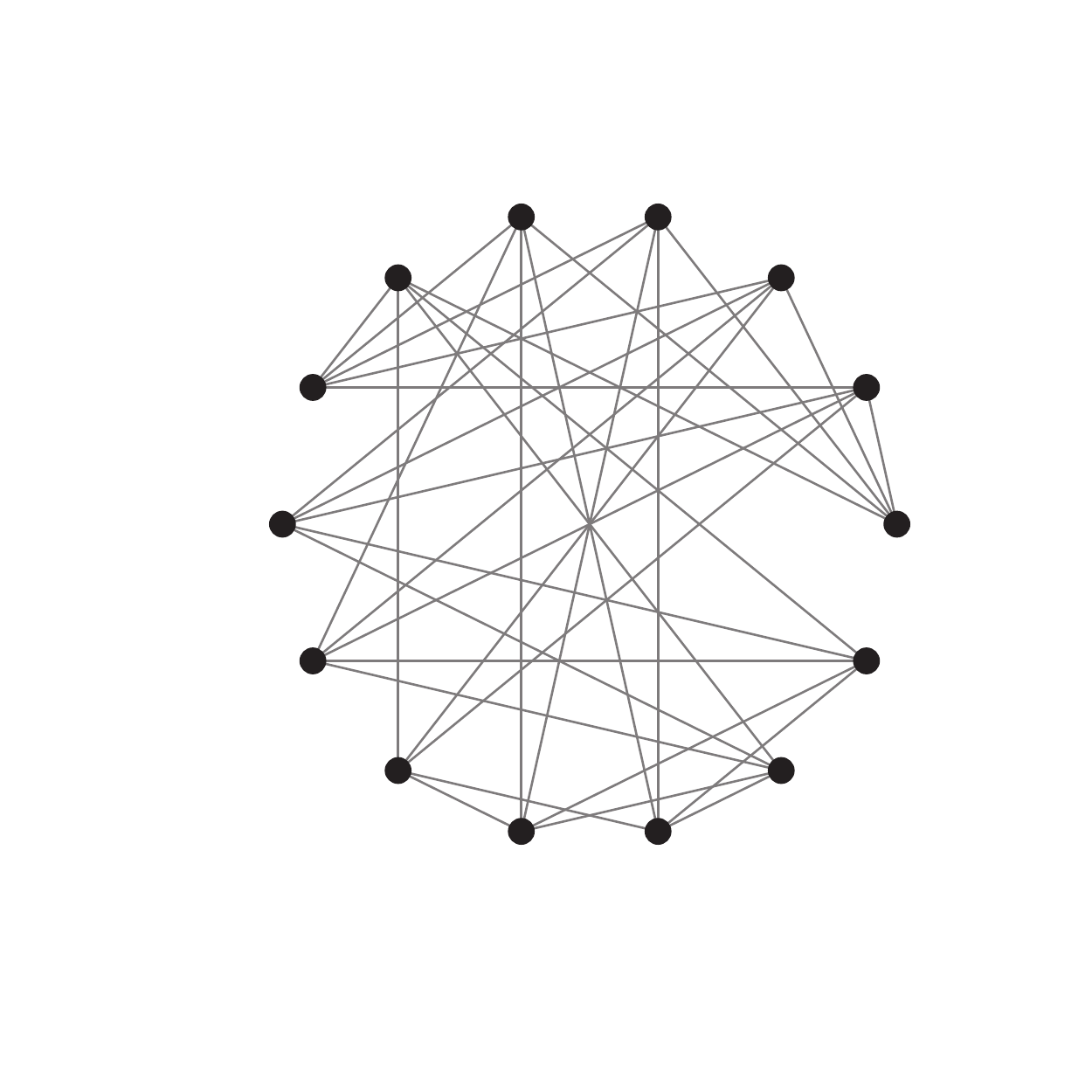}
		\caption{$n=14$, $e=35$\\ $N_t=116280000$}
		\label{n14k5}
	\end{subfigure}
	\hfill
		\begin{subfigure}[b]{0.3\textwidth}
		\centering
		\includegraphics[width=\textwidth]{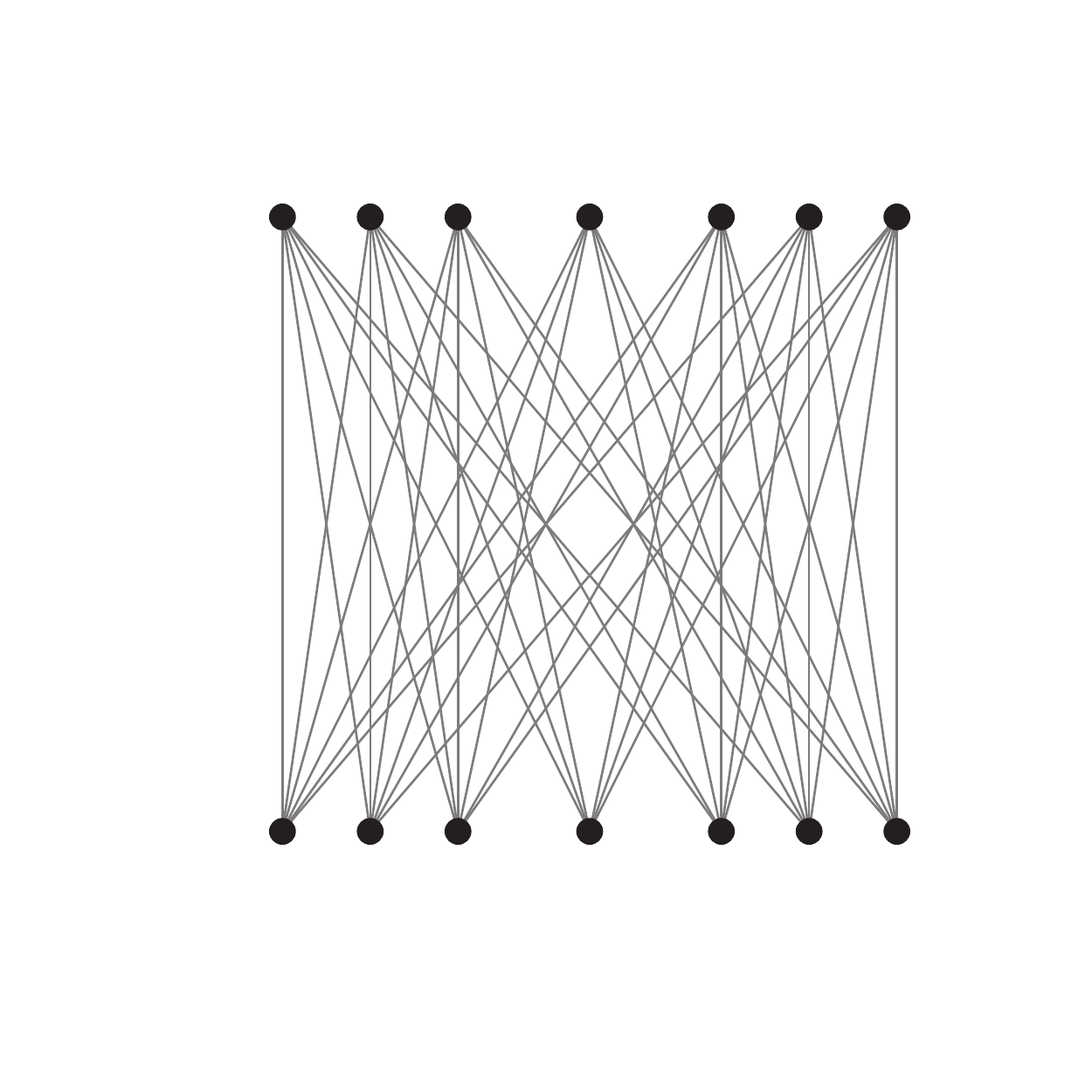}
		\caption{$n=14$, $e=42$\\ $N_t=1575656250$}
		\label{n14k6}
	\end{subfigure}
		\hfill
		\begin{subfigure}[b]{0.3\textwidth}
		\centering
		\includegraphics[width=\textwidth]{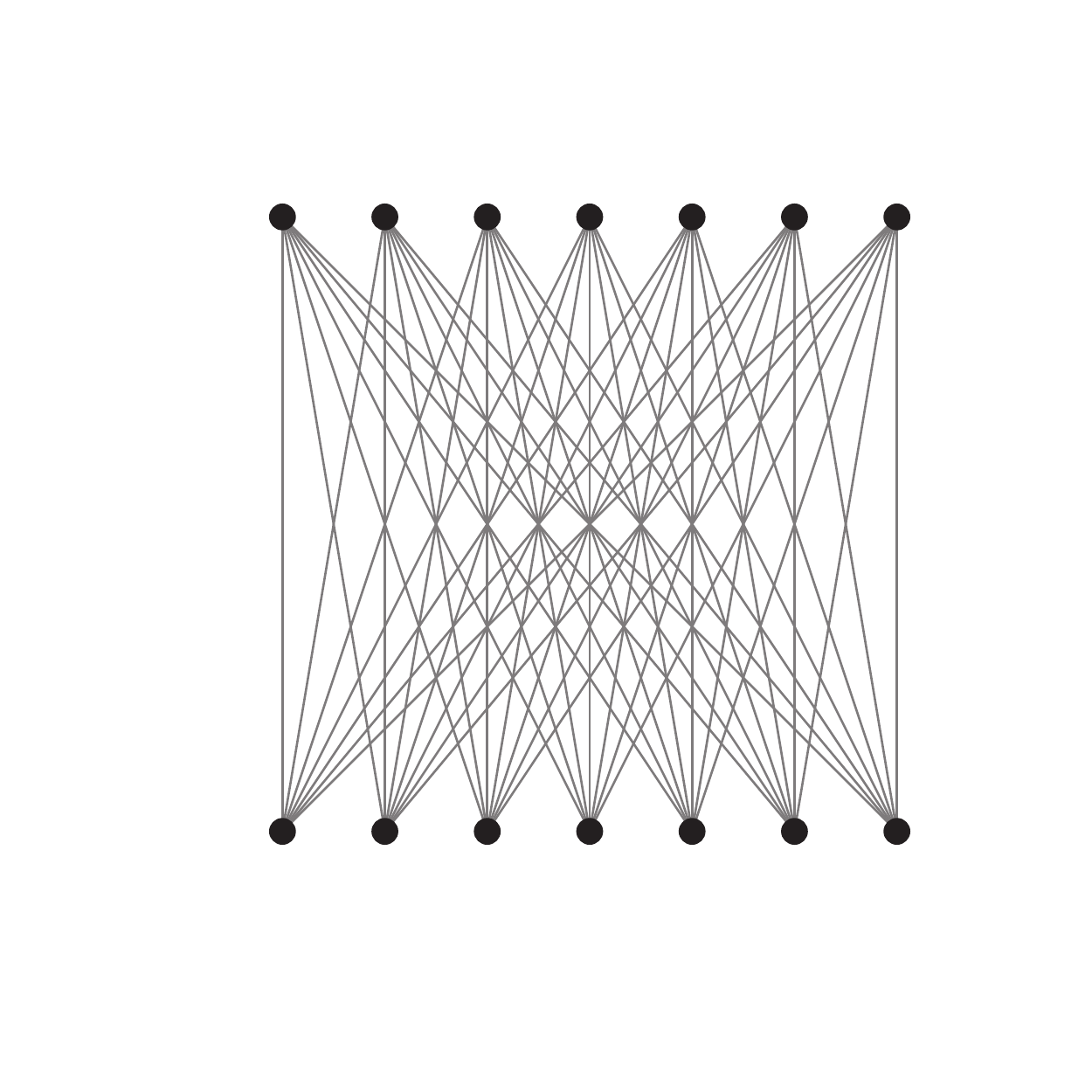}
		\caption{$n=14$, $e=49$\\ $N_t\approx1.38\times10^{10}$}
		\label{n14k7}
	\end{subfigure}
		\hfill
		\begin{subfigure}[b]{0.3\textwidth}
		\centering
		\includegraphics[width=\textwidth]{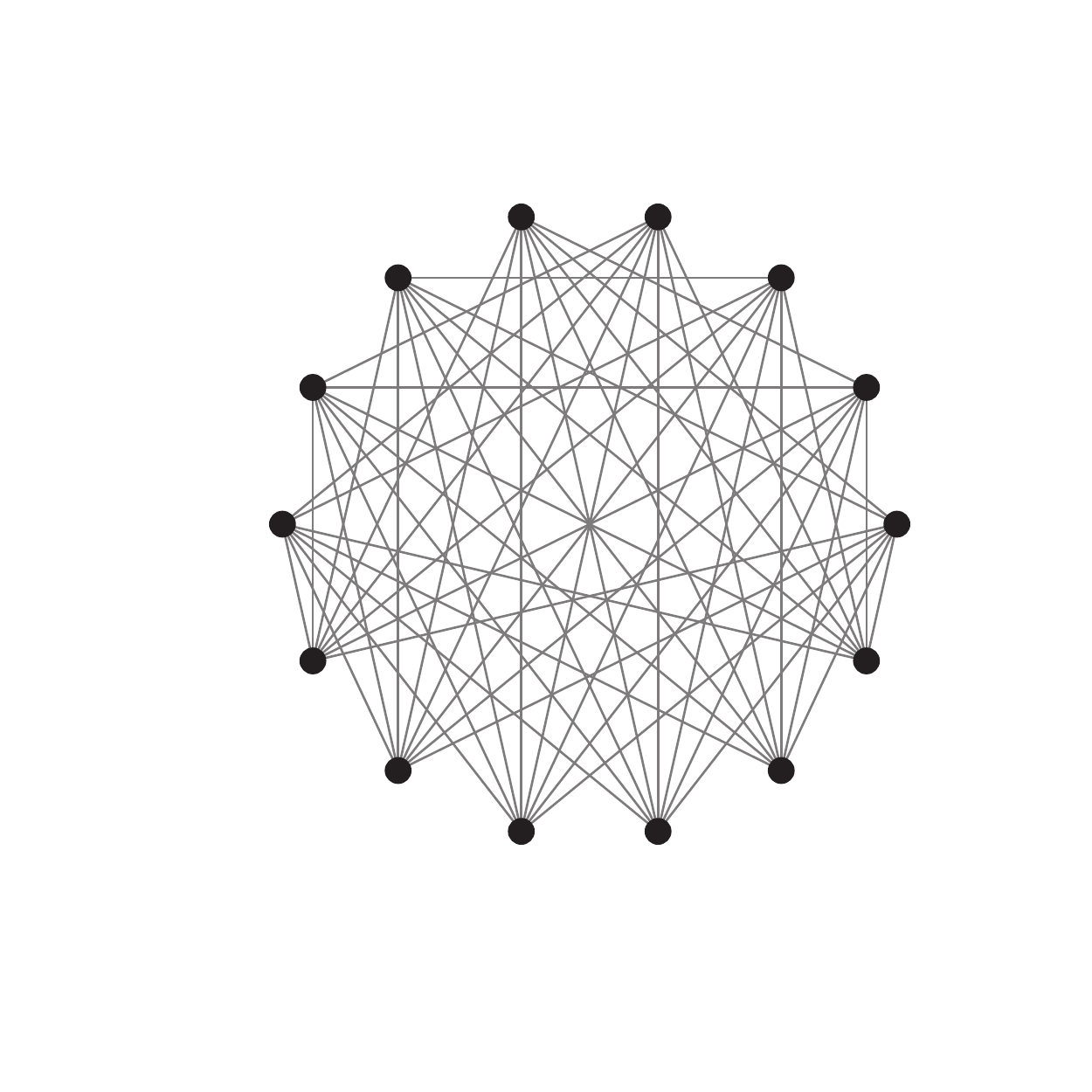}
		\caption{$n=14$, $e=56$\\ $N_t\approx 8.06\times10^{10}$}
		\label{n14k8}
	\end{subfigure}
		\hfill
		\begin{subfigure}[b]{0.3\textwidth}
		\centering
		\includegraphics[width=\textwidth]{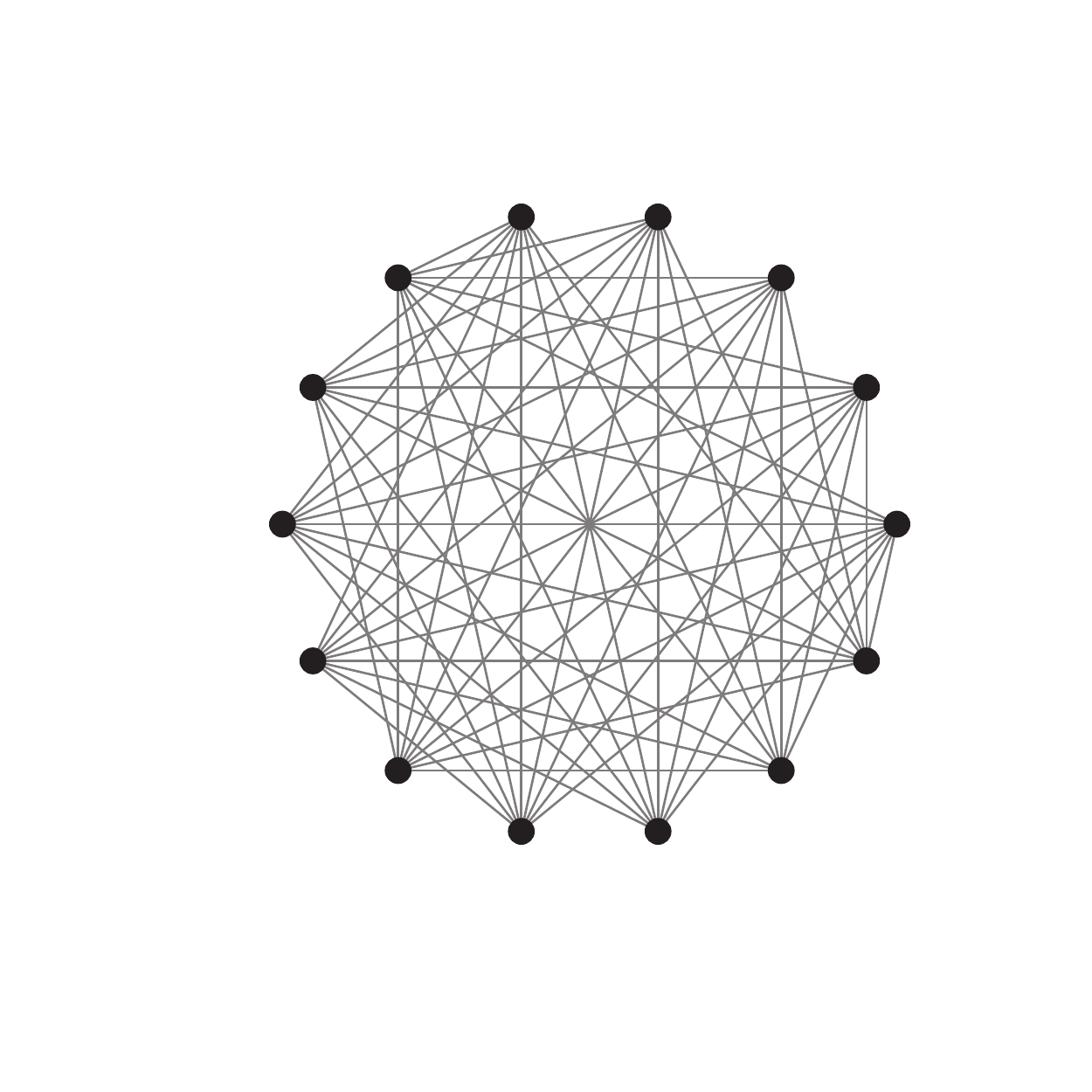}
		\caption{$n=14$, $e=63$\\ $N_t\approx8.06\times10^{10}$}
		\label{n14k9}
	\end{subfigure}
		\hfill
		\begin{subfigure}[b]{0.3\textwidth}
		\centering
		\includegraphics[width=\textwidth]{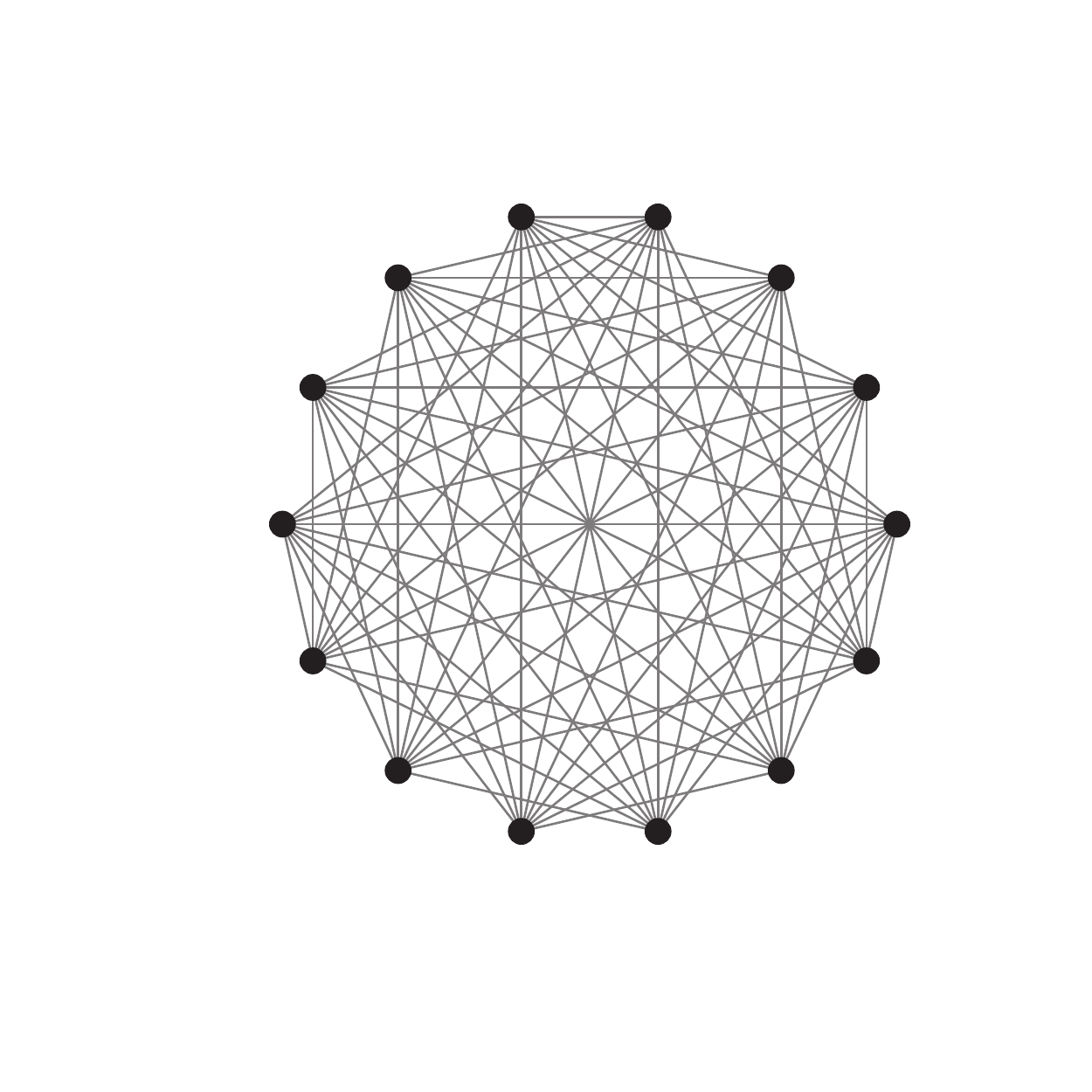}
		\caption{$n=14$, $e=70$\\ $N_t\approx1.65\times10^{12}$}
		\label{n14k10}
	\end{subfigure}
		\hfill
		\begin{subfigure}[b]{0.3\textwidth}
		\centering
		\includegraphics[width=\textwidth]{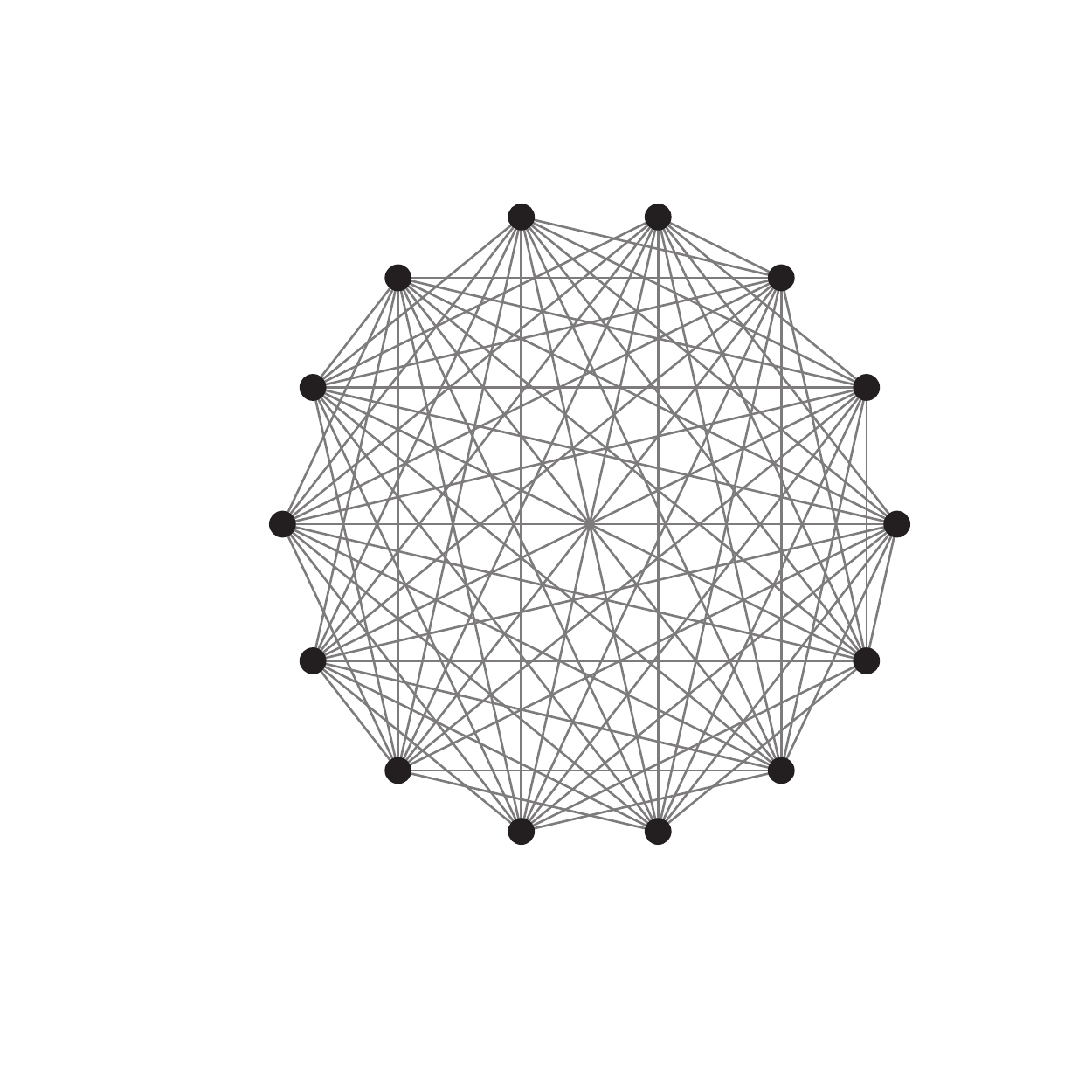}
		\caption{$n=14$, $e=77$ (graph $\overline{3C_3\cup C_5}$)\\ $N_t\approx5.96\times10^{12}$}
		\label{n14k11}
	\end{subfigure}
%		\hfill
%		\begin{subfigure}[b]{0.3\textwidth}
%		\centering
%		\includegraphics[width=\textwidth]{}
%		\caption{$n=14$, $e=84$ (graph $\overline{7K_2}$)\\ $N_t=1.93\times10^{13}$}
%		\label{n14k12}
%	\end{subfigure}
	\caption{Potential UMRG with $n=14$}
	\label{umrgn14}
\end{figure}

Every graph explored at this stage fulfilled the same conditions as smaller-order graphs. All of them had maximum girth, with no exceptions find among these classes. Most of them exhibit minimum diameter, except $(12,30)$ and  $(14,42)$-UMRG which has maximum diameter. No energy pattern was found among these graphs either. Energy values ranged from 0\% to 93\% as well as the Laplacian energy. Finally, algebraic connectivity (Fiedler number) was maximum in every graph except in $(10,20)$, $(12,18)$, $(12,42)$,$(14,28)$,$(14,35)$ and $(14,56)$ whose values corresponded to percentiles 98\% in the first two cases and 99\% in the last classes.

We can notice that for $e = n(n-3)/2$ the candidate  is always the complement of some copies of $C_3$ and a single copy of $C_{3+r}$ with $r = n \mod 3$. 
Let us express this observation as a conjecture.
\begin{conjecture}
For any $n\geq 5$ with $n= 3k+r$, $ 0 \leq r < 3$ and $e = n(n-3)/2$, then 
$\overline{(k-1)C_3\cup C_{3+r}}$ is UMRG.  
\end{conjecture}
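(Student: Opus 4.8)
\emph{Proof strategy.} The plan is to exploit the complement description of the candidate. Writing $n=3k+r$ with $0\le r<3$, the graph is $G=\overline H$, where $H=(k-1)C_3\cup C_{3+r}$ is the $2$-regular graph on $n$ vertices with exactly $n$ edges; thus $G$ is $(n-3)$-regular with $e=\binom{n}{2}-n$ edges. Since establishing UMRG is equivalent to dominating the full reliability polynomial, I would first secure the two necessary conditions of Boesch's corollary---$t$-optimality and maximum $\lambda$ with minimum $m_\lambda$---as milestones, and only then attempt the stronger SUMRG property, which the exhaustive search already confirms for $n\le9$ in this dense family and which would settle the conjecture outright.

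For $t$-optimality I would pass to the Laplacian spectrum. If $0=\mu_1\le\cdots\le\mu_n$ are the Laplacian eigenvalues of $H$, then $\tau(\overline H)=\frac1n\prod_{i=2}^{n}(n-\mu_i)=n^{k-2}(n-3)^{2(k-1)}\prod_{\mu\neq0}(n-\mu)$, where the last product runs over the nonzero Laplacian eigenvalues of $C_{3+r}$; this closed form matches the tabulated values of $N_t$ (for instance $\tau(\overline{4C_3})=6198727824$). Because $H$ has $n$ edges one has $\sum_i\mu_i=2n$, each component contributes a zero eigenvalue---hence the largest possible factor $n$---and $\log(n-x)$ is concave, so the product is pushed up both by maximising the number of components of $H$ and by keeping the remaining eigenvalues small. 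The concrete target is to show that $\prod_{i\ge2}(n-\mu_i)$ is maximised, over all $n$-vertex $n$-edge graphs $H$, by the union of triangles plus one short cycle. I would reduce to $H$ a disjoint union of unicyclic graphs (a tree component forces, by an edge count, a multicyclic component, which only lowers the product), and then compare cycle lengths by a majorization/interlacing argument showing that splitting a long cycle into triangles increases $\tau(\overline H)$.

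The connectivity half is more concrete. As $\lambda(G)\le\delta(G)=n-3$ and every singleton cut has exactly $n-3$ edges, it remains to rule out non-trivial minimum cuts. For a bipartition whose smaller side $A$ has size $b\ge2$, the number of $G$-edges across it is $b(n-b)-e_H(A,B)$ with $e_H(A,B)\le 2b$, equality holding only if $A$ is independent in $H$; this bounds $b$ by the independence number of a union of short cycles and hence keeps it below $n/2$. A short case analysis on $b$ then gives $b(n-b)-e_H(A,B)>n-3$ for every $n\ge6$ (the value $n=5$ being $\overline{C_5}=C_5$, already known to be UMRG). Thus $G$ is super-$\lambda$, the only minimum cuts are the $n$ vertex-isolating ones, $m_\lambda=n$ is the least value attainable by any $(n-3)$-regular graph, and $G$ is max-$\lambda$ with minimum $m_\lambda$.

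The main obstacle is to climb from these two extreme coefficients to the entire disconnecting vector. My preferred route is to prove the full SUMRG property, namely that $G$ minimises $m_i$ for every $i$ simultaneously, through a compression argument: exhibit local edge-swaps---in the spirit of the Kelmans transformation~\cite{kelmans1981graphs}---that never increase any $m_i$ and that drive an arbitrary $(n,e)$-graph toward $\overline{(k-1)C_3\cup C_{3+r}}$. The recursive structure of the candidate is encouraging here: $\overline{(k-1)C_3\cup C_{3+r}}$ is obtained from $\overline{(k-2)C_3\cup C_{3+r}}$ by adjoining three mutually non-adjacent vertices each joined to every older vertex, so an induction on $k$ through a reliability-of-join identity is natural. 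I expect this to be genuinely hard: the brute-force certification only reaches $n=9$, a coefficient-wise (SUMRG) optimum is not guaranteed to persist for all $n$, and if it fails the argument must be completed by comparing reliability polynomials directly---for example by bounding the number of sign changes of $R_G-R_{G'}$ on $[0,1]$---to exclude crossings and recover UMRG from the weaker data.
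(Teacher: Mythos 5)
The statement you are trying to prove is posed in the paper as a \emph{conjecture}, and the paper offers no proof of it: the authors explicitly write that it ``is posed, though not proved,'' and their support consists of exhaustive coefficient computations for $n\le 9$, verification for $n\le 11$, and $t$-optimality evidence among regular graphs for $n=12,13,14$. So there is no proof of the paper's to compare yours against; the only fair comparison is between your outline and a complete argument, and against that standard your submission is a research programme rather than a proof. Its individual ingredients are of uneven solidity. The complement-spectrum computation is correct: $\tau(\overline H)=\frac1n\prod_{i\ge2}(n-\mu_i)$ does reproduce the tabulated tree numbers (e.g.\ $12^2\cdot 9^6\cdot 9^2=6198727824$ for $\overline{4C_3}$), and the join decomposition $\overline{(k-1)C_3\cup C_{3+r}}=3K_1+\overline{(k-2)C_3\cup C_{3+r}}$ is right. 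But the extremal step for $t$-optimality is only asserted: you must beat \emph{every} $(n,\binom n2-n)$-graph, whose complement is an arbitrary $n$-vertex, $n$-edge graph, and neither the reduction to disjoint unions of unicyclic graphs nor the ``splitting a long cycle into triangles increases $\tau(\overline H)$'' majorization claim is carried out. The super-$\lambda$ argument is the most nearly complete piece, though the crude bound $b(n-b)-2b>n-3$ fails at $b=n/2$ for $n=6$ and the promised refinement via the independence number of $H$ still has to be written down.

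The decisive gap, which you acknowledge yourself, is the passage from $t$-optimality and minimum $m_\lambda$ to uniform optimality. These are necessary conditions extracted from Boesch's corollary, not sufficient ones; a graph can satisfy both and still be crossed by a competitor at intermediate $p$. No edge-swap is actually exhibited that is monotone in every coefficient $m_i$, and the analogy with the Kelmans transformation cuts both ways, since Kelmans's construction is precisely what produces parameter pairs with \emph{no} UMRG --- so any compression scheme must fail for some $(n,e)$ and you would need to explain why it succeeds here. The paper's own remark that the $K_{4,4}$ case alone required a dedicated paper is an accurate signal of the difficulty. In short: your strategy is reasonable and its verified fragments are consistent with the paper's data, but every one of its three stages contains an unproven claim, and the conjecture remains open after your argument just as it does in the paper.
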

By the results found by far, the conjecture has been verified for $n\leq 11$ and we have strong evidence for $n=12,13,14$, unless Question~\ref{question} has a negative answer.

One might wonder about that cycle lengths distribution among the possible complements of cycle unions. A possible answer, we found suggestive, is that this distribution minimizes the number of 3-cycle of a regular $(n, n(n-3)/2)$ graph. Indeed, it can be checked that \emph{the number of 3-cycles of the complement of $C_{h_1}\cup \dots \cup C_{h_k}$ with $\sum h_i = n$ is $N_{\{h_i\}_i}=\binom{n}{3}-\sum_i f_n(h_i)$ with 
$f_n(h) = h(n-3) + 1_{h=3}$.}
Therefore, 
$$
\sum_i f_n(h_i) = 
\sum_i h_i(n-3) + 1_{h_i=3} =
(n-3)\sum_i h_i + \sum_i1_{h_i=3} =
(n-3)n + |\{i: h_i = 3\}|,
$$
and $N_{\{h_i\}}$  is minimized when $|\{i: h_i = 3\}|$ is as large as possible, i.e., when   $k=\lceil n/3\rceil $ and $h_1 = \dots=h_{k-1}=3$.
These remarks, together with the one given after Proposition~\ref{Prop:either}, call us to reformulate the original Boesch's over the regular graphs to the following one.
\begin{conjecture}
The $(n,kn/2)$ UMRG, if exists, has maximum girth $g$ among the $k$-regular $(n,kn/2)$-graphs and  minimum number of $g$-cycles among the $k$-regular $(n,kn/2)$-graphs with girth $g$.
\end{conjecture}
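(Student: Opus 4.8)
The plan is to work entirely within the spectral description of $k$-regular graphs, reading off the girth and the number of $g$-cycles from the low-order moments of the eigenvalues, and then connecting those moments to the quantities a UMRG is forced to optimise. Write $\lambda_1=k\ge\lambda_2\ge\dots\ge\lambda_n$ for the adjacency eigenvalues of a $k$-regular $(n,kn/2)$-graph $G$ and set $\nu_i=k-\lambda_i\ge 0$, so that the Kirchhoff theorem gives $N_t=\frac1n\prod_{i=2}^n\nu_i$. Since a UMRG is in particular $t$-optimal (item~1 of Boesch's corollary), it maximises $\prod_{i\ge2}\nu_i$ among all $k$-regular competitors; the strategy is to show that this single extremal property already forces both the maximum-girth and the minimum-$g$-cycle conclusions, and only afterwards to upgrade the argument to the full reliability ordering.

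First I would record the moment identities. The power sums $p_j=\sum_{i=2}^n\nu_i^{\,j}=\sum_{i=1}^n(k-\lambda_i)^j$ are, up to binomial combinations of the traces $\operatorname{tr}(A^m)=\sum_i\lambda_i^m$, exactly the closed-walk counts of $G$. A closed walk of length $m<g$ cannot traverse a cycle, so for every $k$-regular graph of girth at least $g$ the numbers $\operatorname{tr}(A^m)$, and hence $p_1,\dots,p_{g-1}$, depend only on $n$ and $k$ and are therefore constant across the whole competitor class; indeed $p_1=nk$ and $p_2=nk(k+1)$ are fixed for \emph{every} simple $k$-regular graph. The first moment that detects the girth is $p_g$: here $\operatorname{tr}(A^g)$ exceeds its tree-like value by exactly $2g$ times the number of $g$-cycles, so that among $k$-regular graphs of girth $\ge g$ the power sum $p_g$ is an affine, strictly monotone function of the number of $g$-cycles (for $g=3$ this is the classical $\operatorname{tr}(A^3)=6\cdot\#\text{triangles}$). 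Thus raising the girth freezes one further low moment of the $\nu_i$, and, at fixed girth, lowering the number of $g$-cycles prescribes one particular value of $p_g$.

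The core step is then a statement about symmetric functions: I would show that the product $\prod_{i\ge2}\nu_i=e_{n-1}(\nu_2,\dots,\nu_n)$ is maximised, within the admissible spectra, by keeping as many low moments as possible at their tree-like values (maximum girth) and, with those fixed, by driving $p_g$ in the direction corresponding to the fewest $g$-cycles. The natural tools are Newton's inequalities together with the Maclaurin/Schur majorisation relating power sums to the elementary symmetric polynomials; since $p_1$ and $p_2$ are already pinned, the product is controlled by the remaining moments, and one wants to prove that fixing more of the low moments at the tree value and extremising $p_g$ only increases the attainable product. Established at the level of $N_t$, this yields the two assertions for the $t$-optimal graph and hence for any UMRG; I would then repeat the analysis for the next coefficients $N_n,N_{n+1},\dots$, equivalently for the low terms of $T(G;1,y)=\sum_j N_{n-1+j}(y-1)^j$, whose leading cycle-dependence is again governed by the same low moments, in order to make the conclusion compatible with the whole polynomial $R_G(p)$ rather than a single value of $p$.

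The hard part will be precisely this symmetric-function step, and it is where I expect the proof to stall. Prescribing $p_1,\dots,p_{g-1}$ together with the value of $p_g$ does \emph{not} determine $\prod_{i\ge2}\nu_i$: the higher power sums $p_{g+1},p_{g+2},\dots$ (equivalently the counts of longer cycles and of more elaborate closed walks) remain free and also move the product, so there is no elementary monotonicity of $N_t$ in the $g$-cycle count. Turning the heuristic ``fewer short cycles, eigenvalues closer to the tree/Ramanujan spectrum, hence larger $\prod(k-\lambda_i)$'' into a theorem valid uniformly over all $k$-regular competitors and simultaneously for every $p\in[0,1]$ appears to require a genuinely new inequality tying the Tutte coefficients to the girth profile, or a structural classification of $t$-optimal regular graphs; the failure of the unrestricted girth conjecture (the $(9,18)$ counterexample) shows that any such inequality must use regularity in an essential way. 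Finally, the statement is only conditional, so the argument must also accommodate the possibility, left open by Question~\ref{question} and Proposition~\ref{Prop:either}, that no UMRG exists in a given class, in which case the claim holds vacuously.
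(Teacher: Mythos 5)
The statement you are trying to prove is posed in the paper as a \emph{conjecture}, not a theorem: the authors offer no proof, only computational evidence (the $4$-regular $(9,18)$-UMRG, while failing to have maximum girth among all $(9,18)$-graphs, has the minimum number of $3$-cycles among the $4$-regular ones, and the complement-of-cycles calculation shows that the conjectured $(n,n(n-3)/2)$-UMRG minimizes the number of triangles within its family). So there is no paper proof to compare against, and any complete argument you produced would go beyond the paper. Your proposal, however, is not such an argument: as you yourself concede in the final paragraph, the load-bearing step fails. Your moment identities are correct --- $p_1=nk$ and $p_2=nk(k+1)$ are fixed for all simple $k$-regular graphs, $\operatorname{tr}(A^m)$ for $m<g$ is determined by $n$ and $k$, and $\operatorname{tr}(A^g)$ detects the $g$-cycle count with coefficient $2g$ --- but fixing $p_1,\dots,p_{g-1}$ and extremising $p_g$ simply does not control $e_{n-1}(\nu_2,\dots,\nu_n)=nN_t$. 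Newton's and Maclaurin's inequalities compare normalized symmetric means of a \emph{single} spectrum; they give no monotonicity of the product in one low moment while the higher moments $p_{g+1},p_{g+2},\dots$ range freely over whatever the competitor class allows, and it is easy to imagine (and hard to rule out) a graph with more $g$-cycles whose longer-cycle structure more than compensates in $\prod_i\nu_i$. So the ``core step'' is not a lemma awaiting routine verification; it is the entire content of the conjecture restated in spectral language, and nothing in your outline reduces it.

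Two further points. First, even if the $t$-optimality step were established, you would have shown only that the \emph{tree-maximizing} regular graph has maximum girth and fewest $g$-cycles; to conclude for a UMRG you invoke that UMRGs are $t$-optimal, which is fine for this direction, but your closing remark about ``repeating the analysis for $N_n, N_{n+1},\dots$'' is not needed for the conjecture as stated (the conjecture asserts properties of the UMRG, and $t$-optimality of the UMRG is already enough to transfer any property proved for the unique $t$-optimal graph) --- so that part of the plan is superfluous rather than wrong, provided the $t$-optimal graph is unique. Second, you are right that the argument must use regularity essentially, since the $(9,18)$ counterexample kills the unrestricted girth claim; but observe that the paper's own evidence is purely finite and computational, so the honest status of your writeup is a heuristic reduction of the conjecture to an open spectral inequality, not a proof. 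As submitted, the proposal has a genuine and acknowledged gap and does not establish the statement.
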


\section{Concluding Remarks}
Our work suggest that there is still a long way to go regarding knowledge about network reliability. Collectively, our results are widely consistent with previous results about UMRG topologies. This study has developed two exact algorithms to find SUMRG for several orders and sizes. Although some of these graphs had been already described in literature, novel UMRG and candidates to UMRG are presented in this work. Notwithstanding the required large computational effort , this exploratory work provides a valuable insight for the study of most-reliable graphs. To the best of our knowledge, this is the first work to describe a counterexample to the long-held Boesch's conjecture that states that UMRG have the largest girth among all $(n,e)$-graphs: 4-regular $(9,18)$-UMRG has  girth $3$. In fact, this study also identified that the family of $k$-regular graphs with $2k+1$ nodes and ${k(2k+1)}/{2}$ edges, for even $k$, has girth 3 and the consequent proof is given. 
The second major finding of this research concerns the topology of candidates to UMRG with $n$ nodes and ${n(n-3)}/{2}$ edges. A new conjecture is posed, though not proved, in this article. Surprisingly, despite that it was expected to find a more homogeneous topology, being its complement a union of cycles of similar orders, a different result emerged from out research. For graphs with given size and order, the candidate to UMRG is $\overline{(k-1)C_3\cup C_{3+r}}$ where $n=3k+r$ with $0\leq r <3 $. Proving its optimality is a challenging task as it can be concluded from~\cite{grafok44}.  

A lot of questions remains to be answered and many of the conjectures that exist in the field of Uniformly Most Reliable Graphs are still open. Further research in this field would be useful to answer most of this questions and shed light on a topic that is still under-explored.

\section*{\normalsize{ACKNOWLEDGMENTS}}
This work is supported by Project 563 CSIC Proyecto de Iniciación a la Investigación: ``B\'usqueda de grafos uniformemente confiables y an\'alisis de sus propiedades''. This work is a product of the project funded by CSIC, proposed and mainly done by the first author under the direction of the second one.

% #########################################

%\bibliographystyle{plain}
\bibliography{biblio}

\begin{thebibliography}{10}

\bibitem{ath_sobel_2000}
Y.~Ath and M.~Sobel, {\em Counterexamples to conjectures for uniformly
  optimally reliable graphs}, Prob.   Eng.  Informat. Sci. {\bf 14} (2000),
  173–177.

\bibitem{ath_sobel_2000b}
Y.~Ath and M.~Sobel, {\em Some conjectured uniformly optimal reliable
  networks}, Prob.   Eng.  Informat. Sci. {\bf 14} (2000),  375–383.

\bibitem{ath_sobel_conj}
Y.~Ath and M.~Sobel, {\em Some conjectured uniformly optimal reliable
  networks}, Prob.   Eng.  Informat. Sci. {\bf 14} (2000),  375–383.

\bibitem{balakrishnan2004energy}
R.~Balakrishnan, {\em The energy of a graph}, Linear Algebra  its Appl. {\bf
  387} (2004),  287--295.

\bibitem{ballprovanAlg}
M.O. Ball and J.S. Provan, {\em Calculating bounds on reachability and
  connectedness in stochastic networks}, Networks {\bf 13} (1983),  253--278.

\bibitem{bauer}
D.~Bauer, F.~Boesch, C.~Suffel, and R.~Tindell, {\em Combinatorial optimization
  problems in the analysis and design of probabilistic networks}, Networks {\bf
  15} (1985),  257--271.

\bibitem{bauer1987validity}
D.~Bauer, F.~Boesch, C.~Suffel, and R.~Van~Slyke, {\em On the validity of a
  reduction of reliable network design to a graph extremal problem}, IEEE
  Trans.  Circuits  Syst. {\bf 34} (1987),  1579--1581.

\bibitem{boesch1985cut}
F.~Boesch, {\em The cut frequency vector}, Graph theory with applications to
  algorithms and computer science, 1985, pp.  103--121.

\bibitem{boesch09}
F.~Boesch, A.~Satyanarayana, and C.~Suffel, {\em A survey of some network
  reliability analysis and synthesis results}, Networks {\bf 54} (2009),
  99--107.

\bibitem{boesch86}
F.T. Boesch, {\em On unreliability polynomials and graph connectivity in
  reliable network synthesis}, J.  Graph Theory {\bf 10} (1986),  339--352.

\bibitem{boeschsurvey}
F.T. Boesch, {\em Synthesis of reliable networks - a survey}, IEEE Trans.
  Reliab. {\bf 35} (1986),  240--246.

\bibitem{boesch86b}
F.T. Boesch, X.~Li, and C.~Suffel, {\em On the existence of uniformly optimally
  reliable networks}, Networks {\bf 21} (1991),  181--194.

\bibitem{bourel}
M.~Bourel, E.~Canale, F.~Robledo, P.~Romero, and L.~St{\'a}bile, {\em A hybrid
  grasp/vnd heuristic for the design of highly reliable networks}, Hybrid
  Metaheuristics, Springer International Publishing, Cham, 2019, pp.  78--92.

\bibitem{snalib}
C.T. Butts, {\em sna: Tools for Social Network Analysis} 2020, R package
  version 2.6.

\bibitem{grafok44}
E.~Canale, G.~Rela, F.~Robledo, and P.~Romero, 2020. {\em Finding uniformly
  most reliable graphs by counting trivial cuts}.

\bibitem{yutsis}
E.~Canale, F.~Robledo, P.~Romero, and J.~Viera, {\em Building
  reliability-improving network transformations}, 2019 15th International
  Conference on the Design of Reliable Communication Networks (DRCN), 2019, pp.
   107--113.

\bibitem{cheng1981maximizing}
C.S. Cheng, {\em Maximizing the total number of spanning trees in a graph: two
  related problems in graph theory and optimum design theory}, J.
  Combinatorial Theory, Ser. B {\bf 31} (1981),  240--248.

\bibitem{ballcombinat}
C.J. Colbourn, {\em The Combinatorics of Network Reliability}, Oxford
  University Press, Inc., Oxford, 1987.

\bibitem{Colbourn91}
C.J. Colbourn, {\em Combinatorial aspects of network reliability}, Ann. Oper.
  Res. {\bf 33} (1991),  1--15.

\bibitem{igraph}
G.~Csardi and T.~Nepusz, {\em The igraph software package for complex network
  research}, InterJournal {\bf Complex Systems} (2006),  1695.

\bibitem{de2007old}
N.M.M. De~Abreu, {\em Old and new results on algebraic connectivity of graphs},
  Linear algebra  its applications {\bf 423} (2007),  53--73.

\bibitem{fordful}
D.R. Ford and D.R. Fulkerson, {\em Flows in Networks}, Princeton University
  Press, USA, 2010.

\bibitem{grossUOR}
D.~Gross and J.T. Saccoman, {\em Uniformly optimally reliable graphs}, Networks
  {\bf 31} (1998),  217--225.

\bibitem{gutman2006laplacian}
I.~Gutman and B.~Zhou, {\em Laplacian energy of a graph}, Linear Algebra  its
  applications {\bf 414} (2006),  29--37.

\bibitem{kelmans1981graphs}
A.K. Kelmans, {\em On graphs with randomly deleted edges}, Acta Mathematica
  Academiae Scientiarum Hungarica {\bf 37} (1981),  77--88.

\bibitem{kirchhoff1847ueber}
G.~Kirchhoff, {\em Ueber die aufl{\"o}sung der gleichungen, auf welche man bei
  der untersuchung der linearen vertheilung galvanischer str{\"o}me gef{\"u}hrt
  wird}, Annalen der Physik {\bf 148} (1847),  497--508.

\bibitem{nauty}
B.D. McKay and A.~Piperno, {\em Practical graph isomorphism, ii}, J.  symbolic
  computation {\bf 60} (2013),  94--112.

\bibitem{myrvoldTech}
W.~Myrvold, {\em Uniformly-most reliable graphs do not always exist.},
  Technical report DCS-120-IR, University of Victoria, Victoria, B.C., Canada,
  1989.

\bibitem{myrvold1996reliable}
W.~Myrvold, {\em Reliable network synthesis: Some recent developments},
  Proceedings of International Conference on Graph Theory, Combinatorics,
  Algorithms, and Applications, Vol.~3, 1996.

\bibitem{nesmachnow2019cluster}
S.~Nesmachnow and S.~Iturriaga, {\em Cluster-uy: collaborative scientific high
  performance computing in uruguay}, International Conference on Supercomputing
  in Mexico, 2019, pp.  188--202.

\bibitem{provanball}
J.S. Provan and M.O. Ball, {\em The complexity of counting cuts and of
  computing the probability that a graph is connected}, SIAM J.  Comput. {\bf
  12} (1983),  777--788.

\bibitem{RLang}
{R Core Team}, {\em R: A Language and Environment for Statistical Computing}, R
  Foundation for Statistical Computing Vienna, Austria 2020.

\bibitem{petersen}
G.~Rela, F.~Robledo, and P.~Romero, {\em Petersen graph is uniformly
  most-reliable}, Machine Learning, Optimization, and Big Data, Springer
  International Publishing, Cham, 2018, pp.  426--435.

\bibitem{wagner}
P.~Romero, {\em Building uniformly most-reliable networks by iterative
  augmentation}, 2017 9th International Workshop on Resilient Networks Design
  and Modeling (RNDM), 2017, pp.  1--7.

\bibitem{RStudio}
{RStudio Team}, {\em RStudio: Integrated Development Environment for R},
  RStudio, Inc. Boston, MA 2016.

\bibitem{Slyke71}
R.~Van~Slyke and H.~Frank, {\em Network reliability analysis: Part i}, Networks
  {\bf 1} (1971),  279--290.

\bibitem{wangn3}
G.~Wang, {\em A proof of boesch's conjecture}, Networks {\bf 24} (1994),
  277--284.

\end{thebibliography}

\clearpage

\appendix
\setcounter{table}{0}
\renewcommand{\thetable}{A.\arabic{table}}
\section{Number of 2-connected graphs}\label{apen1}
\begin{table}
	\caption{Number of 2-connected graphs per order and size }
	\begin{threeparttable}
		\begin{tabular}{lcc}
			\headrow
			\thead{Order} & \thead{Size} &  \thead{Number} \\
			\hiderowcolors
			6 & 9 & 14\\
			7 & 14 & 59\\
			8 & 12& 429\\
			8 & 16 & 1114\\
			8 & 20 &215\\
			9 & 18 &27015 \\
			9 & 27 & 765\\
				10 & 15 & 23370\\
			10 & 20 & 774876 \\
			10 & 25 & 1012187\\
			10 & 30 &135571 \\
			10 & 35 & 2763\\
			11 & 22 & 264\\
			11 & 33 & 266\\
			11 & 44 & 6 \\
			12 & 18 & 81\\
			12 & 24 & 1542 \\
			12 & 30 & 7848\\
			12 & 36 & 7849\\
			12 & 42 & 1547\\
			12 & 48 & 94\\
			12 & 54 & 9\\
			13 & 26 & 10768\\
			13 & 39 & 367860\\
			13 & 52 & 10786 \\
			13 & 65 & 10 \\
			14 & 21 & 480 \\
			14 & 28 & 88126 \\
			14 & 35 & 3459379 \\
			14 & 42 &  21609300\\
			14 & 49 & 21609301 \\
			14 & 56 &  3459386\\
			14 & 63 & 88193 \\
			14 & 70 & 540 \\
			14 & 77 & 13\\ 
			\hline  % Please only put a hline at the end of the table
		\end{tabular}
	\end{threeparttable}
	\label{tablagrafos1}
\end{table}

\clearpage

\section{Coefficients of the reliability polynomial}\label{apen2}
\begin{table}[h!]
	\caption{UMRG Coefficients $N_i$ ($N_5$-$N_{16}$)}
	\begin{threeparttable}
		\setlength\tabcolsep{1.5pt}
		\begin{tabular}{lccccccccccccc}
			\headrow
			\thead{n} & \thead{e} & \thead{$N_5$}& \thead{$N_6$}& \thead{$N_7$}& \thead{$N_8$}& \thead{$N_9$}& \thead{$N_{10}$} & \thead{$N_{11}$} & \thead{$N_{12}$}& \thead{$N_{13}$}& \thead{$N_{14}$}& \thead{$N_{15}$}& \thead{$N_{16}$}\\
			\hiderowcolors
			6 & 9 & 81 & 78 & 36 & 9& 1& NA& NA& NA& NA& NA& NA& NA \\
			7 & 14 & 0 & 1200 &	2460 & 2668 & 	1932 &	 994 &	364	& 91 & 14 &	1 & NA& NA \\
			8 & 12 & 0 & 0 & 392 &	409&	212&	66	&12&	1& NA& NA& NA& NA	\\
			8 & 16 & 0 & 0 & 4096&	8424&	9552&	7464&	4272&	1812&	560&	120&	16&	1 	\\
			8 & 20 & 0 & 0 & 21025	&69050&	124880&	159680&	156825&	122310&	76680&	38640&	15496&	4845	\\
			9 & 18 & 0 & 0 & 0 & 12480&	27856&	33772&	28344&	17725&	8442&	3051&	816	&153 \\
			9 & 27 & 0 & 0 & 0 & 419904&	1957032&	5128272&	9743436	&14661909&	18191007&	18999630&	16892658&	12854349
			\\
			\hline  % Please only put a hline at the end of the table
		\end{tabular}
	\end{threeparttable}
	\begin{tablenotes}
		\item	NA: Not Applicable for given $n$ and $e$
	\end{tablenotes}	
	\label{tablacoef}
\end{table}

\begin{table}[h!]
	\caption{UMRG Coefficients $N_i$ ($N_{17}$-$N_{27}$)}
	\begin{threeparttable}
		\setlength\tabcolsep{1.5pt}
		\begin{tabular}{lcccccccccccc}
			\headrow
			\thead{n} & \thead{e}& \thead{$N_{17}$}& \thead{$N_{18}$}& \thead{$N_{19}$}& \thead{$N_{20}$}& \thead{$N_{21}$}& \thead{$N_{22}$}& \thead{$N_{23}$}& \thead{$N_{24}$}& \thead{$N_{25}$}& \thead{$N_{26}$}& \thead{$N_{27}$}\\
			\hiderowcolors
			
			6 & 9 & NA& NA& NA& NA& NA& NA& NA& NA& NA& NA & NA \\
			6 & 12 & NA& NA& NA& NA& NA& NA& NA& NA& NA& NA& NA \\
			7 & 14 & NA& NA& NA& NA& NA& NA& NA& NA& NA& NA& NA \\
			8 & 12 & NA& NA& NA& NA& NA& NA& NA& NA& NA& NA& NA	\\
			8 & 16 & NA& NA& NA& NA& NA& NA& NA& NA& NA& NA& NA	\\
			8 & 20 & 1140&	190&	20&	1& NA& NA& NA& NA& NA& NA& NA	\\
			9 & 18 & 18&	1& NA & NA & NA& NA& NA& NA& NA& NA& NA \\
			9 & 27 & 8382393	&4674855&	2218185&	887841&	296001&	80730&	17550&	2925&	351	&27&	1
			\\
			
			\hline  % Please only put a hline at the end of the table
		\end{tabular}
	\end{threeparttable}
	\begin{tablenotes}
		\item	NA: Not Applicable for given $n$ and $e$
	\end{tablenotes}	
	\label{tablacoef2}
\end{table}

\end{document}